\tikzset{>=latex}
\newtheorem{prop}{Proposition}
\newtheorem{lem}[prop]{Lemma}
\newtheorem{thm}[prop]{Theorem}
\theoremstyle{definition}
\newtheorem{df}[prop]{Definition}
\newtheorem{rem}[prop]{Remark}
\newtheorem{ex}[prop]{Example}
\newcommand{\R}{\mathbb{R}}
\newcommand{\C}{\mathbb{C}}
\newcommand{\id}{{\rm Id}}
\newcommand{\h}{\mbox{\ensuremath{[\hspace{-0.8pt}[\hslash]\hspace{-0.8pt}]}}}
\newcommand{\CP}{\mathbb{CP}}
\newcommand{\ideal}[1]{\left(#1\right)}
\newcommand{\inner}[1]{\left<\smash[t]{#1}\right>}
\newcommand{\mat}[1]{\left(\begin{matrix} #1 \end{matrix}\right)}
\begin{document}
\allowdisplaybreaks

\newcommand{\arXivNumber}{1912.01225}

\renewcommand{\PaperNumber}{050}

\FirstPageHeading

\ShortArticleName{On the Notion of Noncommutative Submanifold}
\ArticleName{On the Notion of Noncommutative Submanifold}

\Author{Francesco D'ANDREA}
\AuthorNameForHeading{F.~D'Andrea}

\Address{Universit\`a di Napoli ``Federico II'' and I.N.F.N. Sezione di Napoli, Complesso MSA,\\ Via Cintia, 80126 Napoli, Italy}
\Email{\href{mailto:francesco.dandrea@unina.it}{francesco.dandrea@unina.it}}
\URLaddress{\url{http://wpage.unina.it/francesco.dandrea/}}

\ArticleDates{Received January 11, 2020, in final form May 30, 2020; Published online June 09, 2020}

\Abstract{We review the notion of submanifold algebra, as introduced by T.~Masson, and discuss some properties and examples. A submanifold algebra of an associative algebra $A$ is a quotient algebra $B$ such that all derivations of $B$ can be lifted to~$A$. We will argue that in the case of smooth functions on manifolds every quotient algebra is a submanifold algebra, derive a topological obstruction when the algebras are deformation quantizations of symplectic manifolds, present some (commutative and noncommutative) examples and counterexamples.}

\Keywords{submanifold algebras; tangential star products; coisotropic reduction}

\Classification{46L87; 53C99; 53D55; 13N15}

\section{Introduction}
Inspired by Gelfand duality, establishing that any commutative $C^*$-algebra is isomorphic to the algebra of continuous functions vanishing at infinity on a locally compact Hausdorff space, the point of view of noncommutative geometry is to regard any associative algebra (possibily with additional structure, e.g., a~Dirac operator~\cite{Con}) as describing some virtual ``noncommutative'' space. It is natural to wonder what is the correct notion of noncommutative space contained within another noncommutative space.

Motivated by the derivation-based differential calculus of M.~Dubois-Violette and P.W.~Michor \cite{DV,DM} (reviewed, e.g., in~\cite{DV2,DV3}),
T.~Masson introduced in \cite{Mas} the notion of submanifold algebra as a possible candidate for what in the noncommutative realm should be the analogue of a closed embedded submanifold of a smooth manifold (see \cite{AN} for another possible approach).
The starting point is a short exact sequence of associative algebras (over a field $K$):
\begin{gather}\label{eq:shortexact}
0\to J\to A\xrightarrow{\pi}B\to 0.
\end{gather}
The sequence \eqref{eq:shortexact} induces an exact sequence of complexes
\begin{gather}\label{eq:hochcomplex}
0\to\operatorname{Hom}\big(A^{\otimes n},J\big)\to\operatorname{Hom}_\pi\big(A^{\otimes n},A\big)\to\operatorname{Hom}\big(B^{\otimes n},B\big),
\end{gather}
where the first one is the Hochschild complex of $A$ with coefficients in the $A$-bimodule $J$, the last one is the Hochschild complex of $B$, and the one in the middle is the sub-complex of the Hochschild complex of $A$ given by $K$-linear maps $A^{\otimes n}\to A$ with image in~$J$ if one of the arguments is in~$J$.

Last arrow in \eqref{eq:hochcomplex} is, in general, not surjective. It is for $n=0$, since \eqref{eq:hochcomplex} reduces to \eqref{eq:shortexact}, and for $n=1$ if restricted to Hochschild coboundaries, i.e., inner derivations. The restriction to Hochschild $1$-cocycles, i.e., derivations, already gives a map{\samepage
\begin{gather}\label{eq:piDer}
\pi_*\colon \ \operatorname{Der}_\pi(A)\to\operatorname{Der}(B)
\end{gather}
that may not be surjective (cf.~Examples \ref{ex:no1}, \ref{ex:cross2} and \ref{ex:no2}).}

If $M$ is a smooth manifold and $S\subset M$ a closed embedded smooth submanifold, it is well known that the pullback of the inclusion $\imath\colon S\to M$ is surjective, and one has an exact sequence
\begin{gather*}
0\to J\to C^\infty(M)\xrightarrow{\imath^*}C^\infty(S)\to 0
\end{gather*}
like in \eqref{eq:shortexact}, with $J$ the ideal of smooth functions on $M$ that are zero on~$S$. The induced map~\eqref{eq:piDer} on vector fields (derivations) is surjective as well.

With this example in mind, whenever we have a sequence~\eqref{eq:shortexact} such that \eqref{eq:piDer} is surjective, we will call $B$ a \emph{submanifold algebra} of~$A$~\cite{Mas}.

Submanifold algebras have been recently studied from the point of view of Drinfel'd twists in \cite{FFW,FW,Web}.

The aim of this paper is to understand the meaning of submanifold algebras from the point of view of noncommutative geometry.
The plan is the following. In Section~\ref{sec:pre} we fix the notations and provide some algebraic background.
In Section~\ref{sec:CinftyM} we investigate the case of smooth manifolds (and closed subsets), exhibit examples of commutative submanifold algebras that are not algebras of functions on smooth manifolds, examples of quotient algebras that are not submanifold algebras, and argue that when both $A$ and $B$ in \eqref{eq:shortexact} are algebras of smooth functions the condition on $\pi_*$ is in fact redundant (Theorem \ref{thm:main}).
In Section~\ref{sec:ncexamples} we discuss some examples of submanifold algebras of noncommutative algebras, including ``almost commutative'' spaces. In Section~\ref{sec:starproducts} we study deformation quantizations of Poisson manifolds and discuss some topological obstructions for a quotient algebra to be a submanifold algebra (cf.~Lemma~\ref{lemma:obstr}).

\section{Preliminaries and notations}\label{sec:pre}
\subsection{Notations}
Throughout the following, and unless stated otherwise, $A$ will denote an associative algebra over a field $K$, $M$ a smooth manifolds (without boundary), $C^\infty(M)$ the algebra of \emph{real}-valued smooth function on $M$, $C_0(M)$ the $C^*$-algebra of \emph{complex}-valued continuous functions vanishing at infinity. For $S\subset M$ we define
\begin{gather*}
I(S):=\big\{f\in C_0(M)\colon f|_S=0\big\}
\end{gather*}
and call both $I(S)$ and $I(S)\cap C^\infty(M)$ the \emph{vanishing ideal} of $S$ (it will be clear from the context which one we are referring to). Recall that a \emph{derivation} $D$ of an algebra $A$ is a $K$-linear map $A\to A$ satisfying the Leibniz rule
\begin{gather}\label{eq:Leibniz}
D(ab)=D(a)b+aD(b),\qquad a,b\in A.
\end{gather}
The set of all derivations of $A$ will be denoted by $\operatorname{Der}(A)$. A derivation is \emph{inner} if it is of the form $a\mapsto [x,a]$ for some fixed $x\in A$. The set of all inner derivations will be denoted by $\operatorname{Inn}(A)$.

\subsection{Submanifold algebras}
Let $\pi\colon A\to B$ be a surjective homorphism of associative algebras over a field $K$ and $J:=\ker\pi$.
Let
\begin{gather*}
\operatorname{Der}_\pi(A):=\big\{D\in\operatorname{Der}(A)\colon D(a)\in J \ \forall\, a\in J\big\}.
\end{gather*}

\begin{lem}For every $D\in\operatorname{Der}_\pi(A)$ there is a unique derivation $\widetilde{D}\in\operatorname{Der}(B)$ such that $\widetilde{D}\circ\pi=\pi\circ D$.
\end{lem}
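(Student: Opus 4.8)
The plan is to define $\widetilde{D}$ by the only formula the intertwining condition permits, and then check it is well defined. Concretely, given $b\in B$, use surjectivity of $\pi$ to pick a lift $a\in A$ with $\pi(a)=b$, and set $\widetilde{D}(b):=\pi\big(D(a)\big)$. Uniqueness should be disposed of first and is immediate: if $\widetilde{D}$ and $\widetilde{D}'$ both satisfy $\widetilde{D}\circ\pi=\pi\circ D$, then they agree on the image of $\pi$, which is all of $B$; so there is at most one such map, and it is necessarily given by the formula above. This observation also shows the intertwining relation holds by construction.

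The crucial point --- and the only place the hypothesis $D\in\operatorname{Der}_\pi(A)$ is used --- is independence of the chosen lift. If $a,a'\in A$ both satisfy $\pi(a)=\pi(a')=b$, then $a-a'\in\ker\pi=J$, hence $D(a-a')\in J$ by definition of $\operatorname{Der}_\pi(A)$, so $\pi(D(a))=\pi(D(a'))$. Thus $\widetilde{D}\colon B\to B$ is a well-defined map. This is the entire content of the lemma, and it is exactly the reason the subspace $\operatorname{Der}_\pi(A)$, rather than all of $\operatorname{Der}(A)$, appears in the statement.

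It then remains to verify that $\widetilde{D}$ is a derivation. For $K$-linearity, observe that a $K$-linear combination $\lambda a+\mu a'$ of lifts of $b$ and $b'$ is a lift of $\lambda b+\mu b'$, so $\widetilde{D}(\lambda b+\mu b')=\pi(D(\lambda a+\mu a'))=\lambda\pi(D(a))+\mu\pi(D(a'))=\lambda\widetilde{D}(b)+\mu\widetilde{D}(b')$ by linearity of $D$ and $\pi$. For the Leibniz rule, note that if $a$ lifts $b$ and $a'$ lifts $b'$ then $aa'$ lifts $bb'$ since $\pi$ is multiplicative; applying $\pi$ to the identity $D(aa')=D(a)a'+aD(a')$ and using that $\pi$ is an algebra homomorphism yields $\widetilde{D}(bb')=\widetilde{D}(b)\,b'+b\,\widetilde{D}(b')$. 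No step here presents a genuine obstacle; the difficulty of the lemma is concentrated entirely in the well-definedness check of the previous paragraph.
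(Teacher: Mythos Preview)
Your proof is correct and follows essentially the same approach as the paper: define $\widetilde{D}$ on $B$ via a chosen lift, use the hypothesis $D(J)\subset J$ to show independence of the lift, and then verify linearity and the Leibniz rule by lifting and applying $\pi$. You are simply a bit more explicit than the paper about uniqueness and linearity, which the paper dispatches in one sentence.
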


\begin{proof}
Let $a,b\in A$ with $a-b\in J$. Then $D(a)-D(b)\in J$ and $\pi(D(a))\in B$ depends only on the class $x:=\pi(a)$ of $a$. We define $\widetilde{D}(x):=\pi(D(a))$. Now if $x=\pi(a)$ and $y=\pi(b)$, then $xy=\pi(ab)$, and
\begin{gather*}
\widetilde{D}(xy)=\pi(D(ab))=\pi(D(a)b+aD(b))=\widetilde{D}(x)y+x\widetilde{D}(y).
\end{gather*}
Similarly one proves that $\widetilde{D}$ is linear.
\end{proof}

The assignment $D\mapsto \widetilde{D}$ gives a Lie algebra map:
\begin{gather*}
\pi_*\colon \ \operatorname{Der}_\pi(A)\to\operatorname{Der}(B).
\end{gather*}

\begin{df}\label{def:sMA}
If $\pi_*$ is surjective, we will call $\pi\colon A\to B$ a \emph{coembedding}\footnote{Since in the motivating example of smooth manifolds it is somehow dual to a~(closed) embedding.} and $B$ a \emph{submanifold algebra} of $A$.
\end{df}

Surjectivity of \eqref{eq:piDer} guarantees, among other things, that there is a surjective homomorphism $\Omega_{\operatorname{Der}}(A)\to\Omega_{\operatorname{Der}}(B)$ of ``minimal'' derivation based differential calculi \cite[Proposition~IV.1]{Mas}.

It is straightforward to check that, since $J$ is a two-sided ideal, $\operatorname{Inn}(A)$ is a Lie subalgebra of $\operatorname{Der}_\pi(A)$ and $\pi_*$ maps $\operatorname{Inn}(A)$ \emph{surjectively} to $\operatorname{Inn}(B)$.

\begin{rem}\label{rem:inner}If all derivations of $B$ are inner, then $B$ is a submanifold algebra of~$A$.
\end{rem}

In Section~\ref{sec:CinftyM} we will study the case of commutative algebras, where no non-zero derivation is inner.

\subsection{Compositions of coembeddings}

Given a sequence \mbox{$A\xrightarrow{f}B\xrightarrow{g}C$} of coembeddings, in general $g_*\circ f_*$ and $(g\circ f)_*$ are not equal (they don't even have the same domain). It is then not obvious that the composition $g\circ f$ is still a coembedding. In order to discuss compositions of coembeddings, it is
convenient to rephrase properties of derivations in terms of homomorphisms of associative algebras.
We are going to need the next lemma, whose proof is straightforward.

\begin{lem}\label{lemma:17}
Let $A$ be an associative algebra. A map $D\colon A\to A$ is a derivation if and only if the map
\begin{gather}\label{eq:admissible}
A\to M_2(A),\qquad a\mapsto\mat{a & D(a) \\ 0 & a}
\end{gather}
is a homomorphism $($of associative algebras$)$.
\end{lem}

A homomorphism $A\to M_2(A)$ will be called \emph{admissible} if it is of the form~\eqref{eq:admissible}
for some (unique) $D\in\operatorname{Der}(A)$.

Given a map $f\colon A\to B$, we will denote by $F\colon M_2(A)\to M_2(B)$ the map obtained by app\-lying~$f$ to each matrix element. The definition of coembedding can then be restated in the following form.

\begin{lem}A surjective homomorphism $f\colon A\to B$ is a coembedding if and only if for every admissible homomorphism $\xi\colon B\to M_2(B)$ there exists an admissible homomorphism $\eta\colon A\to M_2(A)$ making the following diagram commute
\begin{center}
\begin{tikzpicture}

\node (A) at (0,0) {$A$};
\node (B) at (2,0) {$B$};
\node (C) at (0,1.5) {$M_2(A)$};
\node (D) at (2,1.5) {$M_2(B)$};

\draw (A) edge[->] node[above] {\footnotesize $f$} (B)
 (C) edge[->] node[above] {\footnotesize $F$} (D)
			(A) edge[->] node[left] {\footnotesize $\eta$} (C)
			(B) edge[->] node[right] {\footnotesize $\xi$} (D);

\end{tikzpicture}
\end{center}
\end{lem}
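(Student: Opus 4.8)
\textit{Proof plan.}\ The plan is to translate both sides of the equivalence into statements about derivations by means of Lemma~\ref{lemma:17}, after which the claim reduces to unwinding the definition of $\pi_*$. Write $J=\ker f$ and, by analogy with \eqref{eq:piDer}, let $f_*\colon\operatorname{Der}_f(A)\to\operatorname{Der}(B)$ be the induced Lie algebra map, where $\operatorname{Der}_f(A)=\{D\in\operatorname{Der}(A)\colon D(J)\subseteq J\}$, so that ``$f$ is a coembedding'' means ``$f_*$ is surjective''. By Lemma~\ref{lemma:17} the admissible homomorphisms $B\to M_2(B)$ are exactly the maps $\xi_\delta\colon x\mapsto\mat{x & \delta(x)\\ 0 & x}$ with $\delta\in\operatorname{Der}(B)$, and likewise every admissible $\eta\colon A\to M_2(A)$ is $\eta_D\colon a\mapsto\mat{a & D(a)\\ 0 & a}$ for a unique $D\in\operatorname{Der}(A)$; this sets up a bijective dictionary between the two formulations.

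Next I would carry out the one short computation. For $\eta=\eta_D$ one has $F(\eta_D(a))=\mat{f(a) & f(D(a))\\ 0 & f(a)}$, whereas $\xi_\delta(f(a))=\mat{f(a) & \delta(f(a))\\ 0 & f(a)}$; since $F\circ\eta_D$ and $\xi_\delta\circ f$ are both homomorphisms $A\to M_2(B)$, the square commutes if and only if $f\circ D=\delta\circ f$. Hence the condition in the statement reads: for every $\delta\in\operatorname{Der}(B)$ there exists $D\in\operatorname{Der}(A)$ with $f\circ D=\delta\circ f$.

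The key point — and essentially the only content beyond bookkeeping — is that such a $D$ automatically lies in $\operatorname{Der}_f(A)$ and lifts $\delta$: if $a\in J$ then $f(D(a))=\delta(f(a))=\delta(0)=0$, so $D(a)\in J$; and then $f_*(D)$, which by its construction satisfies $f_*(D)\circ f=f\circ D=\delta\circ f$, must equal $\delta$ because $f$ is surjective. This at once gives ($\Leftarrow$): if the diagram condition holds, then the $D$ produced for $\xi_\delta$ witnesses $\delta\in\operatorname{im}f_*$, so $f_*$ is surjective. For ($\Rightarrow$), if $f$ is a coembedding and $\xi=\xi_\delta$ is given, choose $D\in\operatorname{Der}_f(A)$ with $f_*(D)=\delta$; then $f\circ D=f_*(D)\circ f=\delta\circ f$, so $\eta_D$ is an admissible homomorphism making the square commute. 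I do not expect a genuine obstacle here: the only subtlety is resisting the temptation to impose $D(J)\subseteq J$ as a hypothesis on the lift — it is forced by $f\circ D=\delta\circ f$, and recognising this is exactly what makes the diagrammatic condition match surjectivity of $\pi_*$ on the nose.
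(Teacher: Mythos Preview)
Your argument is correct and complete. The paper does not supply a proof of this lemma at all; it states the result and moves on, treating it as an immediate reformulation of Definition~\ref{def:sMA} via Lemma~\ref{lemma:17}. Your write-up spells out precisely the translation the paper leaves implicit, including the one point worth naming: that the intertwining relation $f\circ D=\delta\circ f$ forces $D(J)\subseteq J$, so no extra hypothesis on the lift is needed.
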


It is now easy to prove the following:

\begin{prop}Consider a sequence \mbox{$A\xrightarrow{f}B\xrightarrow{g}C$} of maps between associative algebras. If $f$ and $g$ are coembeddings, then $g\circ f$ is a coembedding.
\end{prop}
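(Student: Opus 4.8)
The plan is to forget derivations entirely and argue purely with the diagrammatic characterisation of coembeddings from the preceding lemma; this is precisely the reason the matrix reformulation was set up, since the naive statement ``$g_*\circ f_*=(g\circ f)_*$'' fails (the two maps do not even share a domain). First I would observe that $g\circ f$ is surjective, being a composite of surjective homomorphisms, so the only thing left to verify is the lifting property for admissible homomorphisms.

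Write $F\colon M_2(A)\to M_2(B)$, $G\colon M_2(B)\to M_2(C)$ for the entrywise applications of $f$ and $g$. The key bookkeeping point — which needs a one-line remark but no real work — is that entrywise application is functorial, so the entrywise application of $g\circ f$ is exactly $G\circ F$. Now let $\zeta\colon C\to M_2(C)$ be an arbitrary admissible homomorphism. Since $g$ is a coembedding, there is an admissible homomorphism $\xi\colon B\to M_2(B)$ with $G\circ\xi=\zeta\circ g$. Since $f$ is a coembedding, applying its lifting property to $\xi$ yields an admissible homomorphism $\eta\colon A\to M_2(A)$ with $F\circ\eta=\xi\circ f$.

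It remains to check that $\eta$ is the required lift of $\zeta$ along $g\circ f$, i.e.\ that the square for $g\circ f$ commutes. This is the diagram chase
\[
(G\circ F)\circ\eta=G\circ(F\circ\eta)=G\circ(\xi\circ f)=(G\circ\xi)\circ f=(\zeta\circ g)\circ f=\zeta\circ(g\circ f),
\]
so, by the lemma characterising coembeddings, $g\circ f$ is a coembedding.

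I do not expect a genuine obstacle here: the content is entirely in having introduced the admissible-homomorphism language, after which the statement becomes a routine two-step lifting followed by associativity of composition. The only subtlety worth spelling out is the identification of the entrywise map for $g\circ f$ with $G\circ F$, which is what makes the chase close up.
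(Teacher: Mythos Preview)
Your argument is correct and follows essentially the same approach as the paper: a two-step lift of an admissible homomorphism $C\to M_2(C)$ through $g$ and then $f$, followed by pasting the two commuting squares. Your explicit remark that the entrywise map of $g\circ f$ equals $G\circ F$ and the written-out chain of equalities are the only additions beyond what the paper states.
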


\begin{proof}The composition $g\circ f$ of two surjective homomorphisms is a surjective homomorphisms. Consider the diagram:
\begin{center}
\begin{tikzpicture}

\node (A) at (0,0) {$A$};
\node (B) at (2.5,0) {$B$};
\node (C) at (0,1.8) {$M_2(A)$};
\node (D) at (2.5,1.8) {$M_2(B)$};
\node (E) at (5,0) {$C$};
\node (F) at (5,1.8) {$M_2(C)$};

\draw (A) edge[->] node[above] {\footnotesize $f$} (B)
 (B) edge[->] node[above] {\footnotesize $g$} (E)
 (C) edge[->] node[above] {\footnotesize $F$} (D)
 (D) edge[->] node[above] {\footnotesize $G$} (F)
			(A) edge[dashed,->] node[left] {\footnotesize $\eta$} (C)
			(B) edge[dashed,->] node[fill=white] {\footnotesize $\xi$} (D)
			(E) edge[->] node[right] {\footnotesize $\phi$} (F);

\end{tikzpicture}
\end{center}
If $\phi$ is admissible, since $g$ is a coembedding there exists an admissible $\xi$ making the right square commute. But $f$ is a coembedding as well, so there exists an admissible $\eta$ making the left square~-- and then the outer rectangle~-- commute. Since for every $\phi$ there exists $\eta$ making the outer rectangle commute, $g\circ f$ is a coembedding.
\end{proof}

\section{Function algebras}\label{sec:CinftyM}

\subsection[Submanifold algebras of commutative $C^*$-algebras]{Submanifold algebras of commutative $\boldsymbol{C^*}$-algebras}

Let us start with topological spaces. Suppose that \eqref{eq:shortexact} is an exact sequence of \emph{commutative} complex $C^*$-algebras. It is well known that, by Gelfand duality, $A\simeq C_0(M)$ and $B\simeq C_0(N)$
where $M$ and $N$ are locally compact Hausdorff spaces. Every $*$-homomorphism $\pi\colon C_0(M)\to C_0(N)$ is the pullback of a proper continuous map $F\colon N\to M$, and it is surjective only if~$F$ is injective: it is then a topological embedding of $N$ as a closed subset of $M$. Up to isomorphisms, every exact sequence~\eqref{eq:shortexact} of commutative $C^*$-algebras is then of the form
\begin{gather*}
0\to I(S)\to C_0(M)\xrightarrow{\pi} C_0(S)\to 0
\end{gather*}
with $S\subset M$ a closed topological subspace and
$\pi(f):=f|_S \ \forall\, f\in C_0(M)$
(and every inclusion $S\hookrightarrow M$ of a closed subset gives rise to such a short exact sequence).
We refer to Chapter~1 of~\cite{GVF} for the details.

Since commutative $C^*$-algebras have no non-zero derivations,\footnote{In fact, a stronger statement is due to Kadison \cite[Theorem~2]{Kad}: each derivation of a (possibly non-commutative) $C^*$-algebra annihilates its center.} the associated map $\pi_*$ is surjective as well.

\subsection[Submanifold algebras of $C^\infty$-algebras]{Submanifold algebras of $\boldsymbol{C^\infty}$-algebras}
The crucial point in the discussion in previous section is that the functor $C_0$ is an equivalence between the category of locally compact Hausdorff topological spaces, with proper continuous functions, and the (opposite) category of commutative $C^*$-algebras with $*$-homomorphisms (see, e.g., \cite[Section~II.2.2.7]{Bla}).

Things with smooth manifolds become more involved. We can associate to every smooth manifold (without boundary) $M$ the algebra $C^\infty(M)$, and to any smooth map $F\colon M\to N$ the homomorphism $F^*\colon C^\infty(N)\to C^\infty(M)$. This gives a functor from the category of smooth manifolds to the category of commutative (real unital associative) algebras, which obviously fails to be surjective on objects.\footnote{We could, however, consider the category of $C^\infty$-rings, for which there are characterizations of those objects that are isomorphic to $C^\infty(M)$ for some $M$. See, e.g.,~\cite{MV}.} Nevertheless it is a full and faithful functor (see \cite[Corollary~35.10]{KMS} or \cite[Theorem~2.8]{MR}), and this is enough for our purposes.

Let us consider then a short exact sequence
\begin{gather}\label{eq:exactcommutative}
0\to J\to C^\infty(M)\xrightarrow{\pi}B\to 0,
\end{gather}
where we know a priori that the one in the middle is the algebra of smooth functions on some smooth manifold $M$. We will assume that $\dim(M)\geq 1$ and use the standard identification of $\operatorname{Der}(C^\infty(M))$ with the set $\mathfrak{X}(M)$ of smooth global vector fields on~$M$.

There are several natural questions we may ask:
\begin{enumerate}[label=(\roman*)]\itemsep=0pt
\item Is there a commutative example where $\pi$ is surjective but $\pi_*$ is not? (Are these two conditions independent from each other or is every quotient algebra of $C^\infty(M)$ a~submanifold algebra?) \label{quest:1}

\item Is there an example where $\pi$ and $\pi_*$ are both surjective ($B$ is a submanifold algebra of~$C^\infty(M)$), but $B\not\simeq C^\infty(N)$ for any smooth manifold~$N$?\label{quest:2}

\item If we known that $J$ is the vanishing ideal of a subset $S\subset M$ and $B\simeq C^\infty(M)/J$ a~submanifold algebra, can we conclude that $S$ is a submanifold of~$M$? \label{quest:3}

\item What can we conclude under the assumption that $B=C^\infty(N)$? \label{quest:4}

\end{enumerate}
The first two examples give a positive answer to question \ref{quest:2}.

\begin{ex}[dual numbers]
Let $M=\R$ and $B$ be the subalgebra of $M_2(\R)$ spanned by the identity matrix and the matrix
\begin{gather*}
\varepsilon:=\mat{0 & 1 \\ 0 & 0}.
\end{gather*}
Up to an isomorphism, $B\simeq\R[\varepsilon]/\ideal{\varepsilon^2}$ is the algebra of dual numbers.
Let $\pi\colon C^\infty(\R)\to B$ be the homomorphism (the identity matrix is omitted):
\begin{gather*}
\pi(f):=f(0)+f'(0)\varepsilon \qquad \forall\, f\in C^\infty(\R).
\end{gather*}
Clearly $\pi$ is surjective, and it is not difficult to check that $\pi_*$ is surjective as well.
One has
\begin{gather*}
\operatorname{Der}(C^\infty(\R)) =\big\{f\mapsto \phi f'\colon \phi\in C^\infty(\R)\big\}, \\
\operatorname{Der}(B)  =\left\{\lambda\varepsilon\tfrac{{\rm d}}{{\rm d}\varepsilon} \colon \lambda\in\R\right\}.
\end{gather*}
Here $J=\ker\pi$ is given by those smooth functions on $\R$ vanishing at $0$ together with their first derivative.
Given a derivation $D\colon f\mapsto\phi f'$, from $(\phi f')'(0)=\phi(0)f''(0)+\phi'(0)f'(0)$ it follows that the derivation maps $J$ to $J$ if and only if $\phi(0)=0$. Hence:
\begin{gather*}
\operatorname{Der}_\pi(C^\infty(\R)) =\big\{f\mapsto \phi f'\colon \phi\in C^\infty(\R),\,\phi(0)=0\big\}.
\end{gather*}
If $Df:=\phi f'$ with $\phi(0)=0$, then $\pi_*(D)=\phi'(0)\varepsilon\frac{{\rm d}}{{\rm d}\varepsilon}$. Since $\phi'(0)$ can be any real number, the map $\pi_*$ is surjective.
\end{ex}

We saw in previous example that: the algebra of dual numbers is a submanifold algebra of~$C^\infty(\R)$; it is not isomorphic to any algebra of smooth functions on a~manifold, since it has a~non-zero nilpotent element~$\varepsilon$; the kernel of $\pi$ is not the vanishing ideal of any subset of $\R$, even if $\operatorname{Der}_\pi(C^\infty(\R))$ is the set of vector fields on $\R$ vanishing on the subset $S=\{0\}$.

More generally, for arbitrary $M$, given a point $p\in M$ and a non-zero tangent vector $v\in T_pM$, we can construct a surjective homomorphism
\begin{gather*}
\pi\colon \ C^\infty(M)\to \R[\varepsilon]/\ideal{\varepsilon^2},\qquad
\pi(f):=f(p)+v(f)\varepsilon,
\end{gather*}
and prove (using local coordinates) that $\pi_*$ is surjective as well.

\begin{ex}[germs of smooth functions]
Fix $p\in M$, let $B:=C_p^\infty(M)$ be the algebra of all germs of smooth functions at $p$, and let
\begin{gather*}
\pi\colon \ C^\infty(M)\to C_p^\infty(M)
\end{gather*}
be the map sending a function to its germ at $p$. Such a map is surjective (every germ can be represented by a global smooth function). Its kernel $J$ is the set of all functions that are zero \emph{near} $p$;
since derivations are local, $X(J)\subset J$ for every $X\in\operatorname{Der}(C^\infty(M))$ and $\operatorname{Der}_\pi(C^\infty(M))\equiv \operatorname{Der}(C^\infty(M))$.
The map
\begin{gather*}
\pi_*\colon \ \mathfrak{X}(M)\to T_pM
\end{gather*}
is just the evaluation of vector fields at $p$. It is well known that such a map is surjective \cite[Proposition~8.7]{Lee}.
Stalks of germs of smooth functions on $M$ are then examples of submanifold algebras of $C^\infty(M)$ not isomorphic to $C^\infty(N)$ for any smooth manifold~$N$.\footnote{By contraddiction, assume that $C_p^\infty(M)\simeq C^\infty(N)$ for some smooth manifold $N$. Since $\mathfrak{X}(N)\simeq T_pM$ is finite-dimensional, $N$ must be $0$-dimensional. But then $\mathfrak{X}(N)$ must be $\{0\}$, that implies $\dim(M)=\dim(T_pM)=0$.}
\end{ex}

We want now to answer to question~\ref{quest:3}. If $S\subset M$ is a closed subset of a smooth manifold $M$, we denote by $C^\infty(S)^{\circ}$ the algebra of real-valued functions on $S$ that admit a smooth prolongation to an open neighbourhood of $S$ in $M$ (and then to the whole $M$, cf.~\cite[Lemma~2.26]{Lee}). This is the standard way of defining smooth functions on a general subset of a manifold, when such a~subset has no intrinsic smooth structure. If~$S$ is a closed embedded submanifold, then~$C^\infty(S)^{\circ}$ coincides with the set of functions that are smooth with respect to the smooth structure of~$S$. (However, it should be noted that if~$S$ is an immersed submanifold this is not true.)

\begin{thm}[closed subsets]\label{thm:closed}
$C^\infty(S)^\circ$ is a submanifold algebra of $C^\infty(M)$.
\end{thm}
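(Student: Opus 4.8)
\emph{Proof proposal.} The plan is to produce, for every derivation $D$ of $B:=C^\infty(S)^\circ$, a vector field $X$ on $M$ preserving the ideal $J:=\ker\pi$ and inducing $D$, where $\pi\colon C^\infty(M)\to B$ is the restriction homomorphism $f\mapsto f|_S$. Note first that $\pi$ is indeed surjective with $J=\{f\in C^\infty(M)\colon f|_S=0\}$, by the very definition of $C^\infty(S)^\circ$ (using that $S$ is closed, so a function defined near $S$ extends to all of $M$).

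The first step is to show that derivations of $B$ are \emph{local}: if $g\in B$ vanishes on a relatively open $W\subseteq S$, then $D(g)|_W=0$. This is the usual bump-function argument~-- for $p\in W$ pick $\chi\in C^\infty(M)$ with $\chi(p)=1$ and support in an open $\Omega\subseteq M$ with $\Omega\cap S\subseteq W$; then $\pi(\chi)\,g=0$ in $B$, so applying the Leibniz rule to $D\big(\pi(\chi)g\big)$ and evaluating at $p$ forces $D(g)(p)=0$.

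The core of the proof is a pointwise Hadamard computation. Fix $p\in S$, a chart $(U,x^1,\dots,x^n)$ of $M$ around $p$ with $U$ a ball, a bump function $\chi$ equal to $1$ on a neighbourhood $V\subseteq U$ of $p$ and supported in $U$, and set $D^i:=D\big(\pi(\chi x^i)\big)\in B$. For $g=\pi(\tilde g)\in B$, Hadamard's lemma on $M$ gives $\tilde g=\tilde g(p)+\sum_i(x^i-x^i(p))\tilde h_i$ on $V$ with $\tilde h_i$ smooth and $\tilde h_i(p)=\partial_i\tilde g(p)$; extending the $\tilde h_i$ to $h_i\in C^\infty(M)$ agreeing with them on $V$, the function $w:=\tilde g-\tilde g(p)\cdot 1-\sum_i(\chi x^i-x^i(p)\cdot 1)h_i$ vanishes on $V$, hence on $V\cap S$. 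Applying $D$, using the Leibniz rule, the locality of $D$ (to kill the $w$-term), and $\chi(p)=1$, one arrives at the key identity
\begin{gather*}
D(g)(p)=\sum_i D^i(p)\,\partial_i\tilde g(p),
\end{gather*}
so $D(g)(p)$ is the directional derivative of any extension $\tilde g$ of $g$ along $v_p:=\sum_i D^i(p)\,\partial_i|_p\in T_pM$. In particular $v_p$ does not depend on the extension, and comparing two charts shows it does not depend on the chart either, so $p\mapsto v_p$ is a well-defined section of $TM$ over $S$ whose components in any chart lie in $C^\infty(S\cap V)^\circ$.

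Finally one globalizes. Since each $D^i\in B=C^\infty(S)^\circ$ extends to a smooth function on all of $M$, the expression $\sum_i(\text{extension of }D^i)\,\partial_i$ is an honest vector field on each chart domain; patching these together with the zero field on the open set $M\setminus S$ by means of a partition of unity on $M$ produces $X\in\mathfrak{X}(M)$ with $X(p)=v_p$ for every $p\in S$. The key identity then gives, for $f\in J$, that $X(f)(p)=v_p(f)=D(\pi(f))(p)=D(0)(p)=0$ for all $p\in S$, so $X\in\operatorname{Der}_\pi(C^\infty(M))$; and for $g=\pi(\tilde g)\in B$ it gives $\pi_*(X)\big(\pi(\tilde g)\big)=\pi\big(X(\tilde g)\big)=D(g)$, i.e.\ $\pi_*(X)=D$. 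This proves $\pi_*$ is surjective, so $B$ is a submanifold algebra. The main obstacle is the Hadamard step: one has to verify that $D(g)(p)$ depends on $\tilde g$ only through its $1$-jet at $p$, which is exactly the point where locality and the smooth structure are used; once that is in place the globalization is routine.
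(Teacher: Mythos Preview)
Your proof is correct and follows essentially the same strategy as the paper's: identify for each $p\in S$ the tangent vector $v_p\in T_pM$ determined by $D$, observe that its coordinate components $D^i$ lie in $C^\infty(S)^\circ$ and hence extend to smooth functions on $M$, and then patch the resulting local vector fields to a global $X\in\mathfrak{X}(M)$ restricting to $v$ along $S$. The only differences are cosmetic---you spell out the locality/Hadamard argument that the paper leaves implicit, and you do the partition-of-unity globalization by hand where the paper invokes \cite[Lemma~8.6]{Lee}.
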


\begin{proof}By construction, elements of $C^\infty(S)^\circ$ are functions on $S$ that are in the image of the restriction map $\pi\colon C^\infty(M)\to C^\infty(S)^\circ$, $\pi(f):=f|_S$. We need to prove surjectivity of $\pi_*$.

Any derivation of $C^\infty(S)^\circ$ composed with the evaluation at a point $p\in S$ gives a derivation of $C^\infty(M)$ at $p$, i.e., a vector $X_p\in T_pM$. We need to prove that the map $X\colon S\to TM$, $p\mapsto X_p$, is the restriction to $S$ of a global smooth vector field on~$M$.

Let $p\in S$, let $\big(U,\varphi=\big(x^1,\ldots,x^n\big)\big)$ be a chart on $M$ centered at $p$ and $B$ a coordinate ball with $p\in B$ and $\overline{B}\subset U$. For all $q\in U\cap S$ we can write
\begin{gather*}
X_q=\sum_{i=1}^nv^i(q)\frac{\partial}{\partial x^i}\Big|_q
\end{gather*}
for some (unique) $v^i(q)\in\R$.
Choose smooth functions $\widetilde{x}^i\in C^\infty(M)$ such that $\widetilde{x}^i$ coincide with~$x^i$ on $\overline{B}$ \cite[Lemma~2.26]{Lee}. For every $i=1,\ldots,n$, since $X(\widetilde{x}^i|_S)\in C^\infty(S)^\circ$ and~$\pi$ is surjective, we can choose $f^i\in C^\infty(M)$ such that
\begin{gather*}
X\big(\widetilde{x}^i|_S\big)=\pi\big(f^i\big).
\end{gather*}
For all $q\in\overline{B}\cap S$, one has $f^i(q)=X_q\big(\widetilde{x}^i\big)=X_q(x^i)=v^i(q)$.
As a consequence, the smooth vector field $Y\in\mathfrak{X}(U)$ given by
\begin{gather*}
Y:=f^i\frac{\partial}{\partial x^i}
\end{gather*}
satisfies $Y|_B=X|_B$. The local vector field $X$ is locally the restriction of a local smooth vector field on $M$ (it is a smooth vector field \emph{along} $S$ according to the terminology of \cite{Lee}). It follows from \cite[Lemma 8.6]{Lee} that $X$ is the restriction to $S$ of a global smooth vector field on $M$.
\end{proof}

Thus, for \emph{any} closed subset $S$ of a smooth manifold $M$, $C^\infty(S)^\circ$ is a submanifold algebra of $C^\infty(M)$ (with $\pi$ the pullback of the inclusion $S\hookrightarrow M$ and $J$ the vanishing ideal of $S$).
This notion of noncommutative submanifold, even in the commutative case, covers then more examples than the standard geometric notion of submanifold. This is in some sense analogous to what happens in diffeology, where on any subset of a diffeological space there is a canonical ``subset diffeology'' induced by the ambient space \cite{IZ}.

We can now answer to question \ref{quest:1} and give a commutative example of quotient algebra that is \emph{not} a submanifold algebra.

\begin{ex}[the cross]\label{ex:no1}
Let $S:=\big\{(x,y)\in\R^2\colon xy=0\big\}$,
let $\imath\colon \R\to S$ be the map $x\mapsto (x,0)$,
and $\pi=\imath^*\colon C^\infty(S)^\circ\to C^\infty(\R)$ its pullback.
It follows from Theorem~\ref{thm:closed} that a derivation $X$ of $C^\infty(S)^\circ$ is a smooth vector field along~$S$ of the form
\begin{gather*}
X_p=\sum_{i=1}^2v^i(p)\frac{\partial}{\partial x^i}\Big|_p,
\end{gather*}
where $x^1$, $x^2$ are the standard coordinates on $\R^2$.
Let $f(x,y):=xy$. The condition that $X_p(f)=0$ $\forall\, p\in S$
implies that $v^1(0,y)=0$ $\forall\, y\neq 0$ and $v^2(x,0)=0$ $\forall\, x\neq 0$. By continuity, $X_{p=0}=0$. The image $\pi_*(X)$ is a vector field on $\R$ vanishing at~$0$. The vector field $\partial/\partial x^1$ is not in the image of $\pi_*$, which is then not surjective.

Notice that Theorem~\ref{thm:closed} doesn't apply in this case, since $S$ is not a manifold (it is not locally Euclidean at the origin). So while it is true that $C^\infty(S)^\circ$ is a submanifold algebra of $C^\infty\big(\R^2\big)$ (by Theorem~\ref{thm:closed}), $C^\infty(\R)$ is not a submanifold algebra of $C^\infty(S)^\circ$.
\end{ex}

Consider now an exact sequence \eqref{eq:exactcommutative}, with $B=C^\infty(N)$ the algebra of smooth function on a~second smooth manifold~$N$ (question~\ref{quest:4}). The following theorem holds:

\begin{thm}\label{thm:main} Let $M$ and $N$ be smooth manifolds and $\pi\colon C^\infty(M)\to C^\infty(N)$ a surjective homomorphism. Then $\pi=F^*$ is the pullback of a proper embedding $F\colon N\to M$ and $S:=F(N)$ is a closed embedded submanifold of $M$.
\end{thm}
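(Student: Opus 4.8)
The plan is to turn the algebraic hypothesis into geometry and then invoke standard facts about injective immersions and proper maps. First I would use that the functor $C^\infty(-)$ is full and faithful (recalled above, \cite{KMS,MR}): since $\pi$ is an algebra homomorphism $C^\infty(M)\to C^\infty(N)$ — and since any algebra surjection from a unital algebra onto the unital algebra $C^\infty(N)$ automatically sends $1$ to $1$, because $\pi(1)$ is forced to be a two-sided identity of the image — it is of the form $\pi=F^*$ for a unique smooth map $F\colon N\to M$. It then suffices to prove that $F$ is a proper injective immersion, since a proper injective immersion into a manifold is a closed embedding and its image is a closed embedded submanifold \cite{Lee}.

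\emph{Injectivity and the immersion property.} If $F(p)=F(q)$ with $p\neq q$, then $F^*f=f\circ F$ takes the same value at $p$ and at $q$ for every $f\in C^\infty(M)$, contradicting the surjectivity of $\pi$ together with the fact that $C^\infty(N)$ separates the points of $N$. Similarly, if $\mathrm{d}F_p$ failed to be injective, pick $0\neq v\in T_pN$ with $\mathrm{d}F_p(v)=0$ and $g\in C^\infty(N)$ with $v(g)\neq 0$ (for instance a coordinate function near $p$, in a direction along which $v$ has a nonzero component, cut off by a bump function); then for every $f\in C^\infty(M)$ we have $v(F^*f)=(\mathrm{d}F_p v)(f)=0$, so $g$ is not in the image of $\pi=F^*$, again a contradiction. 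Hence $F$ is an injective immersion.

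\emph{Properness.} This is the one step that uses the global structure of $N$ in an essential way, and I expect it to be the crux. Fix a smooth exhaustion function $h\colon N\to\R$ (every smooth manifold admits one, \cite{Lee}), so that $h^{-1}\big((-\infty,c]\big)$ is compact for every $c\in\R$. By surjectivity of $\pi$ there is $f\in C^\infty(M)$ with $f\circ F=F^*f=h$. Now if $K\subset M$ is compact, choose $c$ with $f(K)\subset(-\infty,c]$; then $F^{-1}(K)\subset (f\circ F)^{-1}\big((-\infty,c]\big)=h^{-1}\big((-\infty,c]\big)$, and since $F^{-1}(K)$ is closed in $N$ (preimage of a closed set under a continuous map) and contained in a compact set, it is compact. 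Thus $F$ is proper, hence a closed embedding by the criterion quoted above, and $S=F(N)$ is a closed embedded submanifold of $M$, as claimed.
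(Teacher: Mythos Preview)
Your proof is correct. The reduction to a smooth map $F$ and the arguments for injectivity and for the immersion property are essentially the same as the paper's (the paper phrases the immersion step dually, showing that every covector ${\rm d}f|_p$ annihilates $\ker T_pF$, but the content is identical to your argument).

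The one genuine difference is in the properness step. The paper does not argue directly: once $F$ is known to be an injective immersion it identifies $C^\infty(N)$ with $C^\infty(S)$ for the immersed submanifold $S=F(N)$ and then invokes the textbook fact (cited as \cite[Example~5-18(b)]{Lee}) that the restriction map $C^\infty(M)\to C^\infty(S)$ to an immersed submanifold is surjective if and only if $S$ is properly embedded. You instead give a direct, self-contained argument: lift a smooth exhaustion function $h$ of $N$ through the surjection $\pi$ to some $f\in C^\infty(M)$, and use boundedness of $f$ on compacta to trap $F^{-1}(K)$ inside a sublevel set of $h$. Your route is more elementary in that it unpacks the properness explicitly rather than deferring it to an exercise in \cite{Lee}; the paper's route is shorter on the page but relies on that black box. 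Both are clean, and your exhaustion-function trick is a nice way to see concretely where surjectivity of $\pi$ is doing the global work.
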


\begin{proof}By \cite[Corollary~35.10]{KMS}, every homomorphism $\pi\colon C^\infty(M)\to C^\infty(N)$ is the pullback of a smooth map $F\colon N\to M$. Assuming that $\pi=F^*$ is surjective, we will prove first that $F$ is injective, then that it is an immersion, and finally that is a proper embedding.

{\bf $\boldsymbol{F}$ is injective:} by contraddiction, suppose that $F$ is not injective: i.e., $F(p)=F(q)$ for some $p\neq q$ in $N$. Then, every function $f\in C^\infty(N)$ in the image of $F^*$ has $f(p)=f(q)$. Since (for every $p\neq q$) there exists a smooth function with $f(p)\neq f(q)$ (closed disjoint subsets of a~smooth manifold can be separated by a~smooth function), $F^*$ is not surjective.

{\bf $\boldsymbol{F}$ is an immersion:} under the usual identification $T_p\R\simeq\R$ and $T_pf={\rm d}f|_p$ for a scalar function, for all $f\in C^\infty(N)$ and all $p\in N$ one has
\begin{gather}\label{eq:covectors}
{\rm d}f|_p={\rm d}g|_{F(p)}\circ T_pF,
\end{gather}
where $g\in C^\infty(M)$ is any function satisfying $F^*(g)=f$.
It follows from~\eqref{eq:covectors} that the covector ${\rm d}f|_p\in T_p^*N$ vanishes on $\ker T_pF$ for all $f$, which implies that $\ker T_pF=\{0\}$ and $F$ is an immersion at $p$ (covectors ${\rm d}f|_p$ span $T_p^*N$, as one can prove by taking the differential of smooth prolongations of local coordinates).

{\bf $\boldsymbol{F}$ is a proper embedding:}
we proved that $S:=F(N)$ is an immersed submanifold of~$M$. It is well known that the restriction map $C^\infty(M)\to C^\infty(S)$, $f\mapsto f|_S$, to an immersed submanifold is surjective if and only if~$S$ is properly embedded (see, e.g., \cite[Example~5-18(b)]{Lee}), and in particular closed in~$M$.
\end{proof}

An immediate corollary of previous theorem is that, for smooth functions on manifolds, the condition of surjectivity of $\pi_*$ is redundant: if~$C^\infty(N)$ is a quotient algebra of~$C^\infty(M)$, then it is also a submanifold algebra.

Example~\ref{ex:no1} allows us to illustrate a fundamental difference between embeddings and coembeddings.
Embeddings of smooth manifolds satisfy the following property:

\begin{prop}\label{prop:33}
Consider a sequence $N\xrightarrow{\alpha}S\xrightarrow{\beta}M$ of smooth maps between smooth manifolds. If $\beta\circ\alpha$ is a closed embedding, then so is~$\alpha$.
\end{prop}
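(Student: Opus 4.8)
The plan is to verify, one at a time, the three conditions characterising a closed embedding for $\alpha$: that $\alpha$ is injective, that it is an immersion, and that it is proper (properness being exactly what promotes an injective immersion between manifolds to a closed embedding). The first two are immediate. For injectivity: if $\alpha(p)=\alpha(q)$ then $(\beta\circ\alpha)(p)=(\beta\circ\alpha)(q)$, and since $\beta\circ\alpha$ is injective, $p=q$. For the immersion property: for each $p\in N$ the chain rule gives $T_p(\beta\circ\alpha)=T_{\alpha(p)}\beta\circ T_p\alpha$, and the left-hand side is injective because $\beta\circ\alpha$ is an immersion; hence $T_p\alpha$ is injective.

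The only step with any real content is properness of $\alpha$. Let $K\subseteq S$ be compact. Then $\beta(K)$ is a compact subset of $M$, and since a closed embedding is in particular a proper map, $(\beta\circ\alpha)^{-1}\big(\beta(K)\big)$ is compact in $N$. Now $\alpha^{-1}(K)\subseteq(\beta\circ\alpha)^{-1}\big(\beta(K)\big)$, and $\alpha^{-1}(K)$ is closed in $N$ because $\alpha$ is continuous and $K$ is closed in the Hausdorff space $S$; being a closed subset of a compact set, $\alpha^{-1}(K)$ is compact. Thus $\alpha$ is proper.

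To conclude I would invoke the classical fact that an injective proper immersion between smooth manifolds is a smooth embedding (see, e.g., \cite[Proposition~4.22]{Lee}), together with the observation that a proper continuous map into a locally compact Hausdorff space is closed, so that $\alpha(N)$ is closed in $S$ and $\alpha\colon N\to\alpha(N)$ is a continuous closed bijection, hence a homeomorphism. Therefore $\alpha$ is a closed embedding. I do not expect a genuine obstacle; the only point that needs a little care is that $\beta$ is assumed neither injective nor proper, so there is no direct way to identify $\alpha(N)$ with $\beta^{-1}\big((\beta\circ\alpha)(N)\big)$ or to see that $\alpha(N)$ is closed by a naive preimage argument. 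Routing the proof through properness of $\alpha$ -- which is inherited from $\beta\circ\alpha$ using only continuity of $\beta$ and the fact that closed subsets of compacta are compact -- is what makes the argument go through.
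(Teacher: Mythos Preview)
Your proof is correct. You verify directly that $\alpha$ is an injective immersion and then show it is proper by a compactness argument, invoking the standard fact that a proper injective immersion between smooth manifolds is a closed embedding.

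The paper takes a genuinely different route: it translates the statement into algebra via the duality between smooth manifolds and their function algebras. Using that $C^\infty$ is fully faithful and that a morphism $C^\infty(M)\to C^\infty(S)$ is surjective if and only if the underlying smooth map is a closed embedding (Theorem~\ref{thm:main}), Proposition~\ref{prop:33} becomes equivalent to the trivial algebraic statement that if $g\circ f$ is surjective then so is $g$ (Proposition~\ref{prop:34}). Your approach is more self-contained and elementary in that it does not rely on Theorem~\ref{thm:main}, which is itself a nontrivial result; the paper's approach, on the other hand, is a one-liner once that machinery is in place and serves to illustrate the theme of the paper, namely that geometric facts about submanifolds can be read off from the algebra side.
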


The proof is straightforward if we rephrase it in terms of algebra morphisms. Recall that every morphism $C^\infty(M)\to C^\infty(S)$ is the pullback of a smooth map $\alpha\colon S\to M$ \cite[Corollary~35.10]{KMS}, and it is surjective if and only if $\alpha$ is a closed embedding (Theorem~\ref{thm:main}). Proposition~\ref{prop:33} is then equivalent to the next Proposition~\ref{prop:34}, whose proof is straightforward:

\begin{prop}\label{prop:34} Consider a sequence $C^\infty(M)\xrightarrow{f} C^\infty(S)\xrightarrow{g}C^\infty(N)$ of algebra morphisms. If $g\circ f$ is surjective, then so is~$g$.
\end{prop}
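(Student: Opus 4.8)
The plan is to observe that this is a purely set-theoretic statement: surjectivity of a composite map forces surjectivity of the second factor, and none of the algebraic structure is actually needed. First I would fix an arbitrary element $c\in C^\infty(N)$. Since $g\circ f$ is surjective, there exists $a\in C^\infty(M)$ with $(g\circ f)(a)=c$. Setting $b:=f(a)\in C^\infty(S)$, we get
\begin{gather*}
g(b)=g(f(a))=(g\circ f)(a)=c,
\end{gather*}
so $c$ lies in the image of $g$. As $c$ was arbitrary, $g$ is surjective, which is the claim.

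There is no real obstacle here; the only point worth stressing is where the difficulty has gone. The argument uses nothing about the algebras $C^\infty(M)$, $C^\infty(S)$, $C^\infty(N)$, nor even that $f$ is a homomorphism -- it holds for any composable pair of maps of sets. The genuine geometric content of Proposition~\ref{prop:33} (that a factor of a closed embedding through smooth manifolds is again a closed embedding) is therefore entirely absorbed into the dictionary, provided by \cite[Corollary~35.10]{KMS} and Theorem~\ref{thm:main}, between surjective algebra homomorphisms and closed embeddings: once that correspondence is in place, Proposition~\ref{prop:33} is just the trivial Proposition~\ref{prop:34} read backwards. This is also exactly the feature that fails for coembeddings, as Example~\ref{ex:no1} shows, since surjectivity of $\pi_*$ is not inherited by factors in the same elementary way.
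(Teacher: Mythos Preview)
Your proof is correct and is essentially the same as the paper's: the paper's one-line argument ``the image of $g\circ f$ is a subset of the image of $g$'' is exactly what you unpack by choosing $c$, finding $a$ with $(g\circ f)(a)=c$, and setting $b=f(a)$. Your additional commentary on where the geometric content has gone (into the dictionary of Theorem~\ref{thm:main}) is accurate and matches the paper's surrounding discussion.
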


\begin{proof}The image of $g\circ f$ is a subset of the image of~$g$.
\end{proof}

The noncommutative analogue of Proposition~\ref{prop:34} does not hold. Namely, there are sequences $A\xrightarrow{f}B\xrightarrow{g}C$ of surjective algebra maps such that $f$ and $g\circ f$ are coembeddings but $g$ is not.
Consider for example the sequence $C^\infty\big(\R^2\big)\xrightarrow{f}C^\infty(S)^\circ\xrightarrow{g}C^\infty(\R)$, where $S$ is the cross, $f$ is the pullback of the inclusion $S\hookrightarrow\R^2$ and $g$ the map in Example~\ref{ex:no1}. By Theorem~\ref{thm:closed}, the map~$f$ is a~coembedding, and~$g\circ f$ is the pullback of the inclusion $\R\to\R^2$ as horizontal axis, so it is a~coembedding as well. However~$g$ is not a coembedding, cf.~Example~\ref{ex:no1}.

\subsection{Polynomial algebras}

For commutative algebras an equivalent formulation of the condition of submanifold algebra is via K{\"a}hler differentials.
Let $A$ be a commutative algebra over a field $K$, $M$ an $A$-module and $(\Omega_{A/K},{\rm d})$ the module of K{\"a}hler differentials. Then, the map
\begin{align*}
\operatorname{Hom}_A(\Omega_{A/K},M)
&\xrightarrow{ \smash{\raisebox{-2.5pt}{\ensuremath{\sim}}} }
\operatorname{Der}_K(A,M), \\
f & \mapsto f\circ {\rm d}
\end{align*}
sending $A$-linear maps $\Omega_{A/K}\to M$ to $M$-valued derivations of $A$ ($K$-linear maps $A\to M$ satisfying \eqref{eq:Leibniz}) is a bijection \cite[Chapter~16]{Eis}.
The universal derivation will be denoted always by ${\rm d}$, whatever is the algebra considered.

If $\pi\colon A\to B$ is a homomorphism of commutative algebras,
since ${\rm d}\circ\pi\colon A\to\Omega_{B/K}$ is a derivation,
by the universal property of K{\"a}hler differentials there exists $\pi_{**}\in\operatorname{Hom}_A(\Omega_{A/K},\Omega_{B/K})$
(where we think of $B$-modules as $A$-modules via $\pi$)
 that makes the following diagram commute:
\begin{center}
\begin{tikzpicture}

\node (OA) at (3,1.5) {$\Omega_{A/K}$};
\node (OB) at (3,0) {$\Omega_{B/K}$};
\node (A) at (0,1.5) {$A$};
\node (B) at (0,0) {$B$};

\draw (A) edge[->] node[left] {\footnotesize $\pi$} (B)
 (A) edge[->] node[above] {\footnotesize ${\rm d}$} (OA)
			(B) edge[->] node[above] {\footnotesize ${\rm d}$} (OB)
 (OA) edge[dashed,->] node[right]{\footnotesize $\pi_{**}$} (OB);

\end{tikzpicture}
\end{center}
Explicitly $\pi_{**}(a{\rm d}b)=\pi(a) {\rm d}\pi(b)$ $\forall\, a,b\in A$. Note that if $\pi$ is surjective, $\pi_{**}$ is surjective as well. We can now rephrase the definition of submanifold algebra in terms of K{\"a}hler differential.

\begin{prop}\label{prop:9}
Let $\pi\colon A\to B$ be a surjective homomorphism of commutative algebras. Then~$B$ is a submanifold algebra of $A$ if and only if for every $f\in\operatorname{Hom}_B(\Omega_{B/K},B)$ there exists $\widetilde{f}\in\operatorname{Hom}_A(\Omega_{A/K},A)$ that makes the following diagram commute:
\begin{center}
\begin{tikzpicture}

\node (OA) at (0,1.5) {$\Omega_{A/K}$};
\node (OB) at (0,0) {$\Omega_{B/K}$};
\node (A) at (3,1.5) {$A$};
\node (B) at (3,0) {$B$};

\draw (A) edge[->] node[right] {\footnotesize $\pi$} (B)
 (OA) edge[dashed,->] node[above] {\footnotesize $\widetilde{f}$} (A)
			(OB) edge[->] node[above] {\footnotesize $f$} (B)
 (OA) edge[->] node[left]{\footnotesize $\pi_{**}$} (OB);

\end{tikzpicture}
\end{center}
\end{prop}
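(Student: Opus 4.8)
The plan is to push everything through the natural isomorphism $\operatorname{Hom}_A(\Omega_{A/K},M)\xrightarrow{\sim}\operatorname{Der}_K(A,M)$, $h\mapsto h\circ{\rm d}$, which holds for every $A$-module $M$; I will use it with $M=A$ and, viewing $B$ as an $A$-module via $\pi$, with $M=B$. The point is that liftability of a derivation of $B$ to $\operatorname{Der}_\pi(A)$ becomes, under this dictionary, exactly the lifting property in the displayed diagram.

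First I would record an elementary reformulation of Definition~\ref{def:sMA}: a derivation $D\in\operatorname{Der}(A)$ lies in $\operatorname{Der}_\pi(A)$ with $\pi_*(D)=\bar D$ if and only if $\pi\circ D=\bar D\circ\pi$. Indeed, if $\pi\circ D=\bar D\circ\pi$ then for $a\in J=\ker\pi$ one gets $\pi(D(a))=\bar D(0)=0$, so $D(J)\subseteq J$ automatically, and then $\pi_*(D)=\bar D$ by the defining property of $\pi_*$; the converse is immediate. Hence $B$ is a submanifold algebra of $A$ if and only if every $\bar D\in\operatorname{Der}(B)$ fits into a commuting square $\pi\circ D=\bar D\circ\pi$ for some $D\in\operatorname{Der}(A)$.

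Next, fix $\bar D\in\operatorname{Der}(B)$ and let $f\in\operatorname{Hom}_B(\Omega_{B/K},B)$ be the unique $B$-linear map with $\bar D=f\circ{\rm d}_B$; the assignment $\bar D\mapsto f$ is a bijection $\operatorname{Der}(B)\to\operatorname{Hom}_B(\Omega_{B/K},B)$, so it is enough to prove the equivalence for a fixed such pair. A direct check with the Leibniz rule shows that $\bar D\circ\pi\colon A\to B$ is a derivation of $A$ valued in the $A$-module $B$, hence corresponds by the universal property to a unique element of $\operatorname{Hom}_A(\Omega_{A/K},B)$; since ${\rm d}_B\circ\pi=\pi_{**}\circ{\rm d}_A$ we have $\bar D\circ\pi=f\circ{\rm d}_B\circ\pi=(f\circ\pi_{**})\circ{\rm d}_A$, so that element is precisely $f\circ\pi_{**}$. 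Finally, writing a candidate lift as $D=\widetilde f\circ{\rm d}_A$ with $\widetilde f\in\operatorname{Hom}_A(\Omega_{A/K},A)$ --- the general form of an element of $\operatorname{Der}(A)$ --- the identity $\pi\circ D=\bar D\circ\pi$ reads $(\pi\circ\widetilde f)\circ{\rm d}_A=(f\circ\pi_{**})\circ{\rm d}_A$. Both $\pi\circ\widetilde f$ and $f\circ\pi_{**}$ are $A$-linear maps $\Omega_{A/K}\to B$ and ${\rm d}_A(A)$ generates $\Omega_{A/K}$ as an $A$-module, so this is equivalent to $\pi\circ\widetilde f=f\circ\pi_{**}$, i.e., to commutativity of the displayed diagram. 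Combining the three steps gives the proposition.

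\textbf{Main obstacle.} There is no genuine difficulty, only bookkeeping: one must keep the $A$-module and $B$-module structures on $\Omega_{B/K}$ and on $B$ straight and verify that $\bar D\circ\pi$ is an $A$-derivation. The one point that must not be fudged is the last step, where $A$-linearity (not merely $K$-linearity) of both sides is needed to upgrade agreement on the generating set ${\rm d}_A(A)$ to equality of the two $A$-module homomorphisms.
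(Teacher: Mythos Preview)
Your argument is correct and follows essentially the same route as the paper's proof: both translate the lifting problem for derivations into the lifting problem for $A$-linear maps on K\"ahler differentials via the universal property, using the identity ${\rm d}_B\circ\pi=\pi_{**}\circ{\rm d}_A$. Your write-up is somewhat more explicit than the paper's---in particular the observation that $\pi\circ D=\bar D\circ\pi$ already forces $D(J)\subseteq J$, and the care taken in the last step to note that $\pi\circ\widetilde f$ and $f\circ\pi_{**}$ are $A$-linear so that equality on ${\rm d}_A(A)$ suffices---but there is no substantive difference in method.
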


\begin{proof}
The situation is illustrated in the following diagram:
\begin{center}
\begin{tikzpicture}
\node (A) at (0,1.5) {$A$};
\node (B) at (0,0) {$B$};
\node (OA) at (3,1.5) {$\Omega_{A/K}$};
\node (OB) at (3,0) {$\Omega_{B/K}$};
\node (AA) at (6,1.5) {$A$};
\node (BB) at (6,0) {$B$};

\draw (A) edge[->] node[left] {\footnotesize $\pi$} (B)
 (A) edge[->] node[above] {\footnotesize ${\rm d}$} (OA)
			(B) edge[->] node[above] {\footnotesize ${\rm d}$} (OB)
 (OA) edge[->] node[right]{\footnotesize $\pi_{**}$} (OB)
 (OA) edge[->] node[above] {\footnotesize $\widetilde{f}$} (AA)
			(OB) edge[->] node[above] {\footnotesize $f$} (BB)
			(A) edge[bend left,->] node[above]{\footnotesize $\widetilde{D}$} (AA)
			(B) edge[bend right,->] node[below]{\footnotesize $D$} (BB)
			(AA) edge[->] node[right] {\footnotesize $\pi$} (BB);

\end{tikzpicture}
\end{center}
Suppose every $D\in\operatorname{Der}(B)$ admits a lift $\widetilde{D}$ making the outer diagram commutative (automatically $\widetilde{D}(J)\subset J$ and $\widetilde{D}\in\operatorname{Der}_\pi(A)$). Then for every $f\in\operatorname{Hom}_B(\Omega_{B/K},B)$ we can lift the derivation $D=f\circ {\rm d}$ to a derivation $\widetilde{D}$ of $A$. By universality $\widetilde{D}=\widetilde{f}\circ {\rm d}$ for some $\widetilde{f}\in\operatorname{Hom}_A(\Omega_{A/K},A)$, and $\widetilde{f}$ is the lift of~$f$ we are looking for.

Vice versa, if every $f$ admits a lift $\widetilde{f}$, given any $D\in\operatorname{Der}(B)$ and given $f$ such that $f\circ {\rm d}=D$, we can lift $f$ to $\widetilde{f}$ and form the derivation $\widetilde{D}=\widetilde{f}\circ {\rm d}$. Automatically $\pi_*(\widetilde{D})=D$, so that $\pi_*$ is surjective.
\end{proof}

\begin{prop}If $A$ is commutative and $\Omega_{A/K}$ is a free $A$-module, then every quotient algebra of~$A$ is a submanifold algebra.
\end{prop}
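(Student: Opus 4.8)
The plan is to derive this immediately from Proposition~\ref{prop:9}. So let $\pi\colon A\to B$ be an arbitrary surjective homomorphism of commutative $K$-algebras and let $f\in\operatorname{Hom}_B(\Omega_{B/K},B)$ be given; the task is to produce $\widetilde{f}\in\operatorname{Hom}_A(\Omega_{A/K},A)$ with $\pi\circ\widetilde{f}=f\circ\pi_{**}$, where $\pi_{**}\colon\Omega_{A/K}\to\Omega_{B/K}$ is the $A$-linear map of the excerpt (and $\Omega_{B/K}$ is viewed as an $A$-module through $\pi$).

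First I would fix an $A$-module basis $\{e_i\}_{i\in I}$ of the free module $\Omega_{A/K}$. For each $i$, the element $f(\pi_{**}(e_i))$ lies in $B$, and since $\pi$ is surjective we may choose $b_i\in A$ with $\pi(b_i)=f(\pi_{**}(e_i))$. Because $\Omega_{A/K}$ is free on $\{e_i\}$, the assignment $e_i\mapsto b_i$ extends uniquely to an $A$-linear map $\widetilde{f}\colon\Omega_{A/K}\to A$.

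It then remains to check commutativity of the square in Proposition~\ref{prop:9}, i.e.\ that $\pi\circ\widetilde{f}=f\circ\pi_{**}$ as maps $\Omega_{A/K}\to B$. Both sides are $A$-linear, where on the target $B$ the $A$-action is the one through $\pi$: for $\pi\circ\widetilde{f}$ this is clear, and for $f\circ\pi_{**}$ it uses that $\pi_{**}$ is $A$-linear and that $f$, being $B$-linear, is in particular $A$-linear. Since two $A$-linear maps out of a free module agree as soon as they agree on a basis, it suffices to observe that $\pi(\widetilde{f}(e_i))=\pi(b_i)=f(\pi_{**}(e_i))$ by construction. Hence $\pi\circ\widetilde{f}=f\circ\pi_{**}$, and Proposition~\ref{prop:9} yields that $B$ is a submanifold algebra of $A$.

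There is essentially no obstacle here; the only point that needs a moment of care is the bookkeeping of the two $A$-module structures involved when reducing the verification of commutativity to the basis elements, and this is routine. I would also remark in passing that the same argument works verbatim if $\Omega_{A/K}$ is merely \emph{projective}: then $\operatorname{Hom}_A(\Omega_{A/K},-)$ is exact, the surjection $A\to B$ of $A$-modules induces a surjection $\operatorname{Hom}_A(\Omega_{A/K},A)\to\operatorname{Hom}_A(\Omega_{A/K},B)$, and the $A$-linear map $f\circ\pi_{**}$ lifts to the desired $\widetilde{f}$; but freeness, which is all that is claimed, already gives the statement by the explicit construction above.
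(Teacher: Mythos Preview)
Your proof is correct and follows essentially the same approach as the paper: both invoke Proposition~\ref{prop:9}, pick a free basis of $\Omega_{A/K}$, lift the values $f(\pi_{**}(e_i))$ along the surjection $\pi$, and extend by $A$-linearity. You are slightly more explicit in justifying why the commutativity check reduces to the basis (both sides being $A$-linear into $B$), and your closing remark that projectivity of $\Omega_{A/K}$ already suffices is a nice observation not in the paper, but the core argument is the same.
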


\begin{proof}It follows from Proposition~\ref{prop:9}. Let $S$ be a free generating set for $\Omega_{A/K}$. Then, every K{\"a}hler differential can be written in a unique way as a finite sum $\sum\limits_{\xi\in S}a_\xi\cdot\xi$ with $a_\xi\in A$. Let $f\in\operatorname{Hom}_B(\Omega_{B/K},B)$. For every $\xi\in S$ there exists (by surjectivity of $\pi$) an element $\widetilde{\xi}\in A$ such that $\pi\big(\widetilde{\xi}\big)=f\circ\pi_{**}(\xi)$. Let $\widetilde{f}$ be defined by
\begin{gather*}
\widetilde{f}\bigg(\sum_{\xi\in S}a_\xi\cdot\xi\bigg):=\sum_{\xi\in S}a_\xi\cdot \widetilde{\xi}.
\end{gather*}
By construction $\widetilde{f}\in\operatorname{Hom}_A(\Omega_{A/K},A)$ and $\pi\circ\widetilde{f}=f\circ\pi_{**}$ as requested.
\end{proof}

\begin{ex}[affine sets]
Let $A:=K[x_1,\ldots,x_n]$, $S\subset K^n$ an affine algebraic set, $J$ the radical ideal of $S$, $B:=A/J$ its coordinate ring and $\pi\colon A\to B$ the quotient map. Since~$\Omega_{A/K}$ is a free $A$-module \cite[Proposition~16.1]{Eis}, $B$ is a submanifold algebra of~$A$.
\end{ex}

Similarly to the case of smooth functions, the coordinate algebra $B$ of $S$ in previous example is a submanifold algebra even when~$S$ is not a smooth manifold. In the case of affine varieties there is however a simple criteria to know if~$S$ is smooth by looking at derivations: if $K$ is algebraically closed and $S$ is irreducible, then~$S$ is a smooth submanifold of $K^n$ if and only if the Lie algebra $\operatorname{Der}(B)$ is simple (see, e.g.,~\cite{BF}).

We close this section with an example of quotient of a polynomial algebra that is not a~submanifold algebra (the polynomial version of Example~\ref{ex:no1}).

\begin{ex}[the ordinary double point]\label{ex:cross2}
Let $A:=K[x,y]/\ideal{xy}$, $B:=K[x]$ and $\pi$ the surjective homomorphism defined by $\pi(y)=0$. A simple computation shows that: $\operatorname{Der}(A)$ is generated by the two derivations $x\frac{\partial}{\partial x}$ and $y\frac{\partial}{\partial y}$; $\operatorname{Der}_\pi(A)=\operatorname{Der}(A)$; the image of $\pi_*$ is freely generated (as a $B$-module) by $x\frac{{\rm d}}{{\rm d}x}$; $\operatorname{Der}(B)$ is freely generated by $\frac{{\rm d}}{{\rm d}x}$. The derivation $\frac{{\rm d}}{{\rm d}x}$ is not in the image of $\pi_*$, that is then not surjective.
\end{ex}

\section{Noncommutative submanifolds: examples}\label{sec:ncexamples}

In parallel with previous section, let's start with (separable complex) $C^*$-algebras. In this case, it is well known that any derivation of a quotient algebra can be lifted~\cite{Ped}. Thus:

\begin{ex}[$C^*$-algebras]
If $\pi\colon A \to B$ is a surjective morphism between separable complex $C^*$-algebras, then $\pi_*$ is surjective.
\end{ex}

In Section~\ref{sec:CinftyM} we studied examples with no non-zero inner derivations. On the other side of the spectrum, any quotient algebra with only inner derivations is a submanifold algebra (Remark~\ref{rem:inner}). This includes the universal enveloping algebra of a semisimple finite-dimensional Lie algebra (over a field $K$ of characteristic~$0$), any central simple algebra (as a corollary of Skolem--Noether theorem, cf.~\cite[p.~80, Example~4c]{DK}), Weyl algebras \cite[Lemma~4.6.8]{Dix}, all von Neumann algebras (Sakai--Kadison theorem~\cite{Kad,Sak}) and in particular finite-dimensional complex $C^*$-algebras (since they coincide with their weak closure). It is not difficult to extend the latter result to the real case. Let us record the result for future use.

\begin{lem}\label{lemma:finitedim}Let $B$ be a finite-dimensional $($real or complex$)$ $C^*$-algebra. Then all derivations of $B$ are inner.
\end{lem}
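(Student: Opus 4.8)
The statement to prove is that every derivation of a finite-dimensional real or complex $C^*$-algebra is inner. The complex case is classical (a finite-dimensional $C^*$-algebra is a von Neumann algebra, so Sakai--Kadison applies, or one invokes the Wedderburn structure together with Skolem--Noether). The real case is not an immediate citation, so the plan is to reduce the real case to the complex case by complexification. First I would recall the structure theory: a finite-dimensional real $C^*$-algebra $B$ is a finite direct sum of matrix algebras $M_{n_i}(\mathbb{D}_i)$ with $\mathbb{D}_i\in\{\R,\C,\mathbb{H}\}$. Since a derivation necessarily preserves each minimal two-sided ideal (the blocks are permuted by automorphisms of the center, but a \emph{derivation} annihilates the center by Kadison's theorem quoted in the footnote, hence fixes each block), it suffices to treat a single block $M_n(\mathbb{D})$.

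\textbf{Key steps.} For a single block, form the complexification $B_\C:=B\otimes_\R\C$, which is again a finite-dimensional $C^*$-algebra (over $\C$); concretely $M_n(\R)_\C=M_n(\C)$, $M_n(\C)_\C=M_n(\C)\oplus M_n(\C)$, and $M_n(\mathbb{H})_\C=M_{2n}(\C)$. A real derivation $D$ of $B$ extends $\C$-linearly to a derivation $D_\C$ of $B_\C$. By the already-established complex case, $D_\C$ is inner: $D_\C(x)=[z,x]$ for some $z\in B_\C$. Writing $z=z_1+iz_2$ with $z_1,z_2\in B$ and restricting to $x\in B$, one gets $D(x)=[z_1,x]+i[z_2,x]$; comparing real and imaginary parts (both $D(x)$ and $x$ lie in $B$) forces $[z_2,x]=0$ for all $x$, i.e.\ $z_2$ is central, so it may be discarded, and $D(x)=[z_1,x]$ with $z_1\in B$. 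Hence $D$ is inner. Assembling the blocks back together (the central element implementing $D$ on $B$ is the direct sum of the block-wise elements) completes the argument.

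\textbf{Main obstacle.} The only subtle point is the reduction to a single block, i.e.\ arguing that a derivation cannot mix the simple summands; this is exactly where the footnote's reference to Kadison (a derivation of any $C^*$-algebra annihilates the center) does the work, since the central idempotents cutting out the blocks are then fixed by $D$, so $D$ maps each block to itself. After that everything is routine linear algebra over the division algebras $\R$, $\C$, $\mathbb{H}$, and the complexification trick disposes of the real case cleanly. Alternatively, one could bypass complexification entirely and quote the real Wedderburn/Skolem--Noether theory directly, but the complexification route keeps the proof short and self-contained given that the complex case is already granted.
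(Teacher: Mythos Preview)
Your proof is correct and follows essentially the same complexification argument as the paper: extend $D$ $\C$-linearly to $B_\C$, use the complex case to write $D_\C=[z_1+{\rm i}z_2,\,\cdot\,]$, and observe that $D(B)\subset B$ forces $z_2$ to be central. The only difference is that your preliminary reduction to a single simple block is superfluous---the paper runs the argument directly on the whole algebra $B$ without invoking the real Wedderburn decomposition or the block-preservation step, which streamlines the proof.
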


\begin{proof}We need to prove the statement in the real case. Assume then that $B$ is real, so that $B_{\C}=B+{\rm i}B$ is a complex finite-dimensional $C^*$-algebra. Every $\R$-linear endomorphism $\phi$ of the vector space underlying $B$ can be extended to a $\C$-linear endomorphism $\phi_{\C}$ of $B_{\C}$ by
\begin{gather*}
\phi_{\C}(a+{\rm i}b)=\phi(a)+{\rm i}\phi(b),\qquad\forall\, a,b\in B.
\end{gather*}
It is straightforward to check that if $\phi\in\operatorname{Der}(B)$ then $\phi_{\C}\in\operatorname{Der}(B_{\C})$. Since $B_{\C}$ has only inner derivations, $\phi_{\C}=[x+{\rm i}y,\,\cdot\,]$ for some fixed $x,y\in B$. But then for all $a\in B$:
\begin{gather*}
\phi(a)=\phi_{\C}(a)=[x,a]+{\rm i}[y,a].
\end{gather*}
Since $\phi$ has image in $B$, $y$ must be central, and $\phi(a)=[x,a]$ for all $a\in B$.
\end{proof}

As a corollary:

\begin{ex}\looseness=-1 Let $\pi\colon A\to B$ be a surjective homomorphism with~$B$ a finite-dimensional real or complex $C^*$-algebra (and $A$ any real or complex algebra). Then~$B$ is a submanifold algebra of~$A$.
\end{ex}

In this list of sporadic examples, the next is a noncommutative example of quotient algebra that is not a submanifold algebra.

\begin{ex}\label{ex:no2}
Let $A:=U(\mathfrak{sb}(2,\R))$ be the universal enveloping algebra of the Lie algebra spanned by two elements $H$ and $E$ with relation $[H,E]=E$. Let $B:=\R[x]$ be the algebra of polynomials in an indeterminate~$x$. By Peter--Weyl theorem $A$ has basis $\big\{H^jE^k\big\}_{j,k\geq 0}$. The map $\pi\colon A\to B$ defined by
\begin{gather*}
\pi\big(H^jE^k\big)=x^j\delta_{k,0}
\end{gather*}
is a surjective homomorphism. The kernel $J$ has basis $\big\{H^jE^k\big\}_{j\geq 0,k\geq 1}$ and is generated by $E$. From
\begin{gather*}
\big[H,\tfrac{1}{k}H^jE^k\big]=H^jE^k \qquad\forall\, j\geq 0,k\geq 1
\end{gather*}
we deduce that $J$ is contained in the commutator ideal, and since $B\simeq A/J$ is commutative $J=[A,A]$ must be exactly the commutator ideal.

Let us prove that the map $\pi_*$ is not surjective. Let $\widetilde{D}\in\operatorname{Der}(B)$ be the derivation $\widetilde{D}=x\frac{{\rm d}}{{\rm d}x}$ and suppose $\widetilde{D}=\pi_*(D)$ for some $D$. Set
\begin{gather*}
D(H)=\sum_{j,k\geq 0}a_{j,k}H^jE^k \qquad\text{and}\qquad
D(E)=\sum_{j,k\geq 0}b_{j,k}H^jE^k
\end{gather*}
with $a_{j,k},b_{j,k}\in\R$.
From $\widetilde{D}=\pi_*(D)$ we deduce that $a_{1,0}=1$ and $a_{j,0}=0$ for all $j\neq 1$. From $D(E)\in J$ we deduce that $b_{j,0}=0$ $\forall\, j\geq 0$. From the Leibniz rule we get
\begin{align*}
0 &=D([H,E]-E)=[D(H),E]+[H,D(E)]-D(E) \\
&=\bigg[H+\sum_{j\geq 0,\, k\geq 1}a_{j,k}H^jE^k,E\bigg]+\bigg[H,
\sum_{j\geq 0,\, k\geq 1}b_{j,k}H^jE^k\bigg]-\sum_{j\geq 0,k\geq 1}b_{j,k}H^jE^k \\
&=E+
\sum_{j\geq 0,\, k\geq 1}a_{j,k}\big[H^j,E\big]E^k
+\sum_{j\geq 0,\, k\geq 2}b_{j,k}(k-1)H^jE^k.
\end{align*}
Since $\big[H^j,E\big]\in J$, we get $0=E+({\cdots})E^2$, which is not zero whatever is the element that multiplies $E^2$. We arrived at a contraddiction, proving that $\widetilde{D}\notin\operatorname{Im}(\pi_*)$.
\end{ex}

Example~\ref{ex:no2} has a ``folkloristic'' interpretation: (the complexification of) the algebra $A$ can be interpreted as the coordinate algebra of $\kappa$-Minkowski space~\cite{kappa} in $1+1$ dimensions, that is generated by a time operator $t$ and position operator $q$ subject to the relation $[t,q]={\rm i}\kappa^{-1}q$ (with $\kappa\neq 0$ a constant). The algebra $B$ can be interpreted as the coordinate algebra on the time ``axis''. Such an axis is not a submanifold of $\kappa$-Minkowski in the sense of Definition~\ref{def:sMA}.

\subsection{Free algebras}
Any homomorphism $\pi\colon A\to B$ maps the center $Z(A)$ of $A$ into the center $Z(B)$ of $B$. If $\pi$ is surjective,
$\pi_*\colon \operatorname{Der}_\pi(A)\to\operatorname{Der}(B)$ is a module map that covers the map $\pi|_{Z(A)}\colon Z(A)\to Z(B)$, meaning that
\begin{gather*}
\pi_*(zD)=\pi(z)\pi_*(D)\qquad\forall\, D\in\operatorname{Der}_\pi(A)\qquad \text{and} \qquad z\in Z(A).
\end{gather*}
In Example \ref{ex:no2} the map $\pi|_{Z(A)}\colon Z(A)\to Z(B)$ is not surjective (since $Z(A)=K$ is trivial while $Z(B)=B$ is not). One may think that this condition has something to do with non-surjectivity of~$\pi_*$, but next example shows that this is not the case.

\begin{ex}[free algebras]
Let \mbox{$A=K\!\inner{x,y}$} be the free algebra with two generators~$x$ and~$y$, $B=K[x,y]$, $J=[A,A]$ the commutator ideal. Note that every derivation $D$ sends the commutator ideal into itself, since
\begin{gather*}
D([a,b])=[D(a),b]+[a,D(b)] \qquad\forall\, a,b\in A.
\end{gather*}
Thus $\operatorname{Der}_\pi(A)=\operatorname{Der}(A)$. Since $Z(A)=K$ is the set of constant polynomials and $Z(B)=B$, clearly $\pi|_{Z(A)}\colon Z(A)\to Z(B)$ is not surjective.

If we pick up two elements $a,b\in A$, there exists a unique derivation $D$ such that $D(x)=a$ and $D(y)=b$ (by universality, extended linearly and via the Leibniz rule). Thus $\operatorname{Der}(A)\simeq A^2$ as a~vector space. For such a $D$, $\pi_*(D)=\pi(a)\frac{\partial}{\partial x}+\pi(b)\frac{\partial}{\partial y}$, hence~$\pi_*$ is surjective.
\end{ex}

In fact, any quotient algebra of a free algebra is a submanifold algebra.
The tensor algebra example in~\cite{Mas} is a special case of Proposition~\ref{prop:free}.

\begin{prop}\label{prop:free} Let $S$ be a set, $A:=K\!\inner{S}$ the free algebra generated by $S$ and $\pi\colon A\to B$ a~surjective homomorphism to a second associative algebra~$B$. Then~$B$ is a submanifold algebra of~$A$.
\end{prop}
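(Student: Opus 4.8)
The plan is to exploit the universal property of the free algebra, exactly as in the two-generator example that precedes the proposition. First I would note that any derivation of a free algebra automatically preserves every two-sided ideal of the form $[A,A]$-type, but more importantly that $\operatorname{Der}_\pi(A)$ is large: given any $D\in\operatorname{Der}(A)$ that sends $J=\ker\pi$ to itself we get an element of $\operatorname{Der}_\pi(A)$, and we will produce such $D$'s by prescribing their values on the generators. The key observation is that a derivation of $K\!\inner{S}$ is \emph{freely} determined by an arbitrary choice of images of the elements of $S$: for any set-map $S\to A$, $s\mapsto a_s$, there is a unique $D\in\operatorname{Der}(A)$ with $D(s)=a_s$, obtained by extending via $K$-linearity and the Leibniz rule \eqref{eq:Leibniz} to words in the generators. (One should remark that this is well-defined precisely because $A$ is free, so there are no relations to check compatibility against.)

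Now let $D'\in\operatorname{Der}(B)$ be given; I want to lift it. For each $s\in S$ the element $D'(\pi(s))\in B$ has, by surjectivity of $\pi$, a preimage: choose $\widetilde{a}_s\in A$ with $\pi(\widetilde{a}_s)=D'(\pi(s))$. Let $D\in\operatorname{Der}(A)$ be the unique derivation with $D(s)=\widetilde{a}_s$ for all $s\in S$. I then claim $\pi\circ D$ and $D'\circ\pi$ agree on all of $A$. They agree on generators by construction, and both $\pi\circ D$ and $D'\circ\pi$ satisfy the twisted Leibniz rule $\Delta(ab)=\Delta(a)\pi(b)+\pi(a)\Delta(b)$ — that is, both are $\pi$-derivations $A\to B$ in the sense that makes them homomorphisms into the relevant triangular matrix algebra (cf.\ Lemma~\ref{lemma:17}). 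Since $A$ is generated as an algebra by $S$, a straightforward induction on word length shows that two such maps agreeing on $S$ agree everywhere; hence $\pi\circ D=D'\circ\pi$. In particular $D$ maps $J=\ker\pi$ into $J$, so $D\in\operatorname{Der}_\pi(A)$, and $\pi_*(D)=D'$ by the uniqueness clause in the first Lemma of Section~\ref{sec:pre}. Therefore $\pi_*$ is surjective and $B$ is a submanifold algebra of $A$.

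I do not expect a genuine obstacle here: the only point requiring any care is the inductive verification that the $\pi$-derivation $\pi\circ D - D'\circ\pi$, which vanishes on $S$, vanishes on all products, and this is immediate from the twisted Leibniz identity since every element of $A$ is a $K$-linear combination of words in $S$. Everything else — existence of the lifted derivation, existence of preimages $\widetilde{a}_s$ — is automatic from freeness and from surjectivity of $\pi$, with no need to choose the $\widetilde{a}_s$ coherently.
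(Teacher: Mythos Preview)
Your proof is correct and rests on the same core idea as the paper's: choose preimages $\widetilde a_s\in A$ of $D'(\pi(s))$ and use freeness to manufacture a derivation of $A$ with those prescribed values on generators, then check that $\pi\circ D=D'\circ\pi$ by an induction on word length. The only real difference is in packaging. You invoke directly the ``universal property for derivations'' of a free algebra (a derivation is freely and uniquely determined by any choice of images of the generators) and then verify separately that two $\pi$-derivations agreeing on $S$ agree on all of $A$. The paper instead folds both steps into a single application of the ordinary universal property for \emph{homomorphisms}: it sends each generator $x$ to the upper-triangular matrix $\left(\begin{smallmatrix}x & \delta_x\\ 0 & x\end{smallmatrix}\right)\in M_2(A)$, extends to a homomorphism $\widetilde f\colon A\to M_2(A)$, reads off the derivation $\widetilde D$ from the $(1,2)$ entry via Lemma~\ref{lemma:17}, and then compares the two \emph{homomorphisms} $\pi\circ\widetilde f$ and $f\circ\pi$ on generators. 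Your route is arguably more elementary since it avoids the matrix detour; the paper's route has the advantage that it never needs to state or prove the derivation-extension property of free algebras as a separate lemma, reducing everything to the standard universal property for algebra maps.
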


\begin{proof}
By the universal property, for every map $f\colon S\to C$ from $S$ to an associative algebra $C$ there exists a unique homomorphism $\widetilde{f}\colon A\to C$ such that $\widetilde{f}|_S=f$.
Let $D\in\operatorname{Der}(B)$. For all $x\in S$ there exists
$\delta_x\in A$ such that $\pi(\delta_x)=D(\pi(x))$ (by surjectivity of~$\pi$). Let
$C\subset M_2(A)$ be the subalgebra of matrices of the form
\begin{gather*}
\mat{a & b \\ 0 & a}, \qquad a,b\in A.
\end{gather*}
Let $f\colon S\to C$ be the map given by
\begin{gather*}
f(x):=\mat{ x & \delta_x \\ 0 & x }
\end{gather*}
and $\widetilde{f}\colon A\to C$ the corresponding algebra morphism. For $i=1,2$ denote by $\operatorname{pr}_i\colon C\to A$ the projection on the matrix element in position $(i,i)$. Since $\operatorname{pr}_i\circ\widetilde{f}\colon A\to A$ is a homomorphism given on $S$ by $\operatorname{pr}_i\circ f=\id_S$, by unicity of the lift it must be $\operatorname{pr}_i\circ\widetilde{f}=\id_A$. Therefore
\begin{gather*}
\widetilde{f}(a)=\mat{ a & \widetilde{D}(a) \\ 0 & a }
\end{gather*}
for some map $\widetilde{D}\colon A\to A$. Since $\widetilde{f}$ is a homomorphism, from Lemma~\ref{lemma:17} we deduce that~$\widetilde{D}$ is a~derivation.

Extend $\pi$ to $M_2(A)$ in the obvious way. By construction
\begin{gather*}
\pi(\widetilde{f}(x))=\mat{ \pi(x) & \pi(\delta_x) \\ 0 & \pi(x) }=
\mat{ \pi(x) & D(\pi(x)) \\ 0 & \pi(x) }=f(\pi(x))
\end{gather*}
for all $x\in S$. Since $\pi\circ\widetilde{f}$ and $f\circ\pi$ are homomorphism that coincide on generators, they must be equal, which means $\pi\circ\widetilde{D}=D\circ\pi$. The latter automatically implies that $\widetilde{D}\in\operatorname{Der}_\pi(A)$.
\end{proof}

\subsection{Almost commutative spaces}
One of the main applications of vector bundles in physics is to Yang--Mills theories: here $L^2$ sections of a complex smooth Hermitian vector bundle $\pi\colon E\to M$ describe the physical state of a particle (or several particles) ``living'' in the manifold $M$; vectors in a fiber describe the ``internal'' degrees of freedom of such a particle. In the celebrated Dirac equation, for example, $M$ is a $4$-dimensional Riemannian spin manifold, $\pi\colon E\to M$ the spinor bundle and one looks for solutions of the equation -- among smooth sections of such a bundle~-- describing the state of a couple particle-antiparticle with spin $1/2$.

Inspired by Kaluza--Klein theories, where one derives a $4$-dimensional Yang--Mills theory coupled with gravity from a purely gravitational theory on some auxiliary $5$-dimensional mani\-fold, A.~Connes suggested to replace the unobserved extra dimension of Kaluza--Klein by a~$0$-dimensional noncommutative space. The starting point is the tensor product $C^\infty(M)\otimes F$ of smooth functions on a manifold and a finite-dimensional real algebra~$F$, describing some kind of virtual $0$-dimensional noncommutative space.
Starting from a purely geometric theory on such a product, one is able to derive the complicated Lagrangian of the standard model of particle physics coupled with gravity\footnote{This is of course an oversimplification: the full story is beyond the scope of this paper. The interested reader can consult the books \cite{CM,vS}.} (see, e.g.,~\cite{CM} or~\cite{vS} and references therein).

Following the point of view of Connes, an ``almost commutative space'' is something described by the tensor product of the algebra of smooth functions on a manifold and some finite-dimensional algebra or, more generally, by an \emph{algebra bundle} over a manifold.

Let $K=\R$ or $\C$ and $F$ be a finite-dimensional $K$-algebra. A smooth \emph{algebra bundle} over a~(real) smooth manifold $M$, with typical fiber $F$, is a smooth vector bundle $\pi\colon E\to M$ whose fibers are $K$-algebras and whose local trivializations give maps \mbox{$E_p\to\{p\}\times F$} ($\forall\, p\in M$) that are not only isomorphisms of $K$-vector spaces, but of $K$-algebras as well \cite[p.~377]{Gre}.

Given any smooth vector bundle $\pi\colon E\to M$,
it is well known that $M$ is a submanifold of~$E$ (via the zero section); moreover if $S\subset M$ is a submanifold, then $\pi^{-1}(S)\subset E$ is a~submanifold (inverse image of a~submanifold by means of a submersion) and in particular all fibers \mbox{$E_p=\pi^{-1}(p)$} are submanifolds of~$E$.
Having at our disposal an algebraic notion of submanifold, we wonder if analogous properties hold for algebra bundles.

If $\pi\colon E\to M$ is an algebra bundle, the module $\Gamma^\infty(\pi)$ of global smooth sections is a $K$-algebra with pointwise product. For $\xi,\eta\in\Gamma^\infty(\pi)$ we define
\begin{gather*}
(\xi\cdot\eta)(p):=\xi(p)\eta(p) \qquad \forall\, p\in M,
\end{gather*}
where the one on the right is the product in the fiber $E_p$. By construction for any $p\in M$, the evaluation at $p$ gives a homomorphism $\operatorname{ev}_p\colon \Gamma^\infty(\pi)\to E_p$.

If $E=M\times F$ and $\pi$ is the projection on the first factor, then $\Gamma^\infty(\pi)\simeq C^\infty(M)\otimes F$ similarly to the standard model example.

\begin{prop}\label{prop:20}
Let $\pi\colon E\to M$ be an algebra bundle with typical fiber a finite-dimensional real or complex $C^*$-algebra.\footnote{For example $F=\C\oplus\mathbb{H}\oplus M_3(\C)$ in the case of the standard model, where $\mathbb{H}$ is the (real) algebra of quaternions.} Then, for every $p\in M$, the map
\begin{gather*}
\operatorname{ev}_p\colon \ \Gamma^\infty(\pi)\to E_p
\end{gather*}
is a coembedding.
\end{prop}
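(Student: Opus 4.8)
The plan is to prove that $\operatorname{ev}_p$ is surjective (which is clear: local trivializations let one extend any element of the fiber $E_p$ to a global section) and then that $(\operatorname{ev}_p)_*$ is surjective, i.e.\ that every derivation of the $C^*$-algebra $E_p$ lifts to a derivation of $\Gamma^\infty(\pi)$ preserving the kernel $J$ of $\operatorname{ev}_p$. The key observation is that $E_p$ is a finite-dimensional real or complex $C^*$-algebra, so by Lemma~\ref{lemma:finitedim} every derivation of $E_p$ is inner: given $D\in\operatorname{Der}(E_p)$ there is $x\in E_p$ with $D=[x,\,\cdot\,]$. Since $\operatorname{ev}_p$ is surjective, pick a global section $\xi_0\in\Gamma^\infty(\pi)$ with $\xi_0(p)=x$, and set $\widetilde{D}:=[\xi_0,\,\cdot\,]\in\operatorname{Inn}(\Gamma^\infty(\pi))$. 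This is an inner derivation of $\Gamma^\infty(\pi)$, hence lies in $\operatorname{Der}_{\operatorname{ev}_p}(\Gamma^\infty(\pi))$ because $J$ is a two-sided ideal, and $(\operatorname{ev}_p)_*(\widetilde{D})=[\operatorname{ev}_p(\xi_0),\,\cdot\,]=[x,\,\cdot\,]=D$. Thus every derivation of $E_p$ lifts and $\operatorname{ev}_p$ is a coembedding.

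Alternatively — and this is essentially the same argument packaged via Remark~\ref{rem:inner} — one can simply invoke that $E_p$ is a finite-dimensional $C^*$-algebra whose derivations are all inner (Lemma~\ref{lemma:finitedim}), so $E_p$ is automatically a submanifold algebra of \emph{any} algebra surjecting onto it, in particular of $\Gamma^\infty(\pi)$ via $\operatorname{ev}_p$. The only thing left to verify is that $\operatorname{ev}_p$ is indeed a surjective algebra homomorphism, which follows from the definition of the pointwise product on $\Gamma^\infty(\pi)$ together with the existence of bump functions: trivialize $E$ over a neighbourhood $U$ of $p$, lift an arbitrary $v\in E_p$ to a local section over $U$, and multiply by a smooth function equal to $1$ near $p$ and supported in $U$ to obtain a global section evaluating to $v$ at $p$.

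The proof presents essentially no obstacle; the only mildly delicate point is making sure the extension-by-bump-function step produces an element of $\Gamma^\infty(\pi)$ (the vector bundle structure guarantees smoothness of the local section, and multiplication by a scalar smooth function stays inside the space of smooth sections), but this is routine. The substantive input is Lemma~\ref{lemma:finitedim}, which already covers the real case; everything else is bookkeeping about inner derivations and two-sided ideals.
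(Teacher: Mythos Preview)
Your proof is correct and follows essentially the same route as the paper: surjectivity of $\operatorname{ev}_p$ via extending fiber elements to global sections, then Lemma~\ref{lemma:finitedim} to conclude all derivations of $E_p$ are inner, whence Remark~\ref{rem:inner} (or your explicit inner-derivation lift) gives surjectivity of $(\operatorname{ev}_p)_*$. The paper's proof is just a two-sentence compression of exactly this argument.
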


\begin{proof}It is true for every vector bundle that any vector in a fiber can be extended to a~global smooth section. The map $\operatorname{ev}_p$ is then surjective. Since every derivation of~$E_p$ is inner (Lem\-ma~\ref{lemma:finitedim}), the induced map on derivations is surjective as well (Re\-mark~\ref{rem:inner}).
\end{proof}

In the example of the standard model, previous proposition can be interpreted by saying that the $0$-dimensional noncommutative space encoding the internal degrees of freedom of particles is a ``noncommutative submanifold'' of the product space.
One may wonder if $M$ is a ``noncommutative submanifold'' as well: it is difficult to answer such a question in general, since a homomorphism $\Gamma^\infty(\pi)\to C^\infty(M)$ may not even exist. We will investigate this question for trivial algebra bundles, i.e., tensor products of algebras, cf.~Example~\ref{ex:products}.

Another example covered by Proposition~\ref{prop:20} is the rational noncommutative torus. Let $\theta=p/q\in\mathbb{Q}$ be a rational number, with $p$ and $q$ coprime. The algebra of ``complex-valued smooth functions'' on the noncommutative torus $\mathbb{T}_\theta$ is isomorphic to $\Gamma^\infty(\pi)$ with $\pi\colon E\to \mathbb{T}^2$ a~suitable algebra bundle over the (ordinary) $2$-torus~\cite{GVF}. The typical fiber is the algebra~$M_q(\C)$ of all $q\times q$ complex matrices. The spectral triple of the rational noncommutative torus was recently studied from the point of view of algebra bundles in \cite{CD}.

\begin{ex}[tensor products]\label{ex:products}
Let $K$ be any field and $A:=A_1\otimes A_2$ a tensor product of two associative $K$-algebras. Suppose $\varepsilon\colon A_1\to K$ is a non-zero augmentation. Then $\pi:=\varepsilon\otimes\id\colon A\to A_2$ is a surjective homomorphism. For every $D\in\operatorname{Der}(A_2)$, the formula
\begin{gather*}
\widetilde{D}(a_1\otimes a_2):=a_1\otimes D(a_2),\qquad\forall\, a_1\in A_1,a_2\in A_2,
\end{gather*}
defines a derivation $\widetilde{D}\in\operatorname{Der}_\pi(A)$ satisfying by construction $\pi_*(\widetilde{D})=D$. Thus, $A_2$ is a submanifold algebra of $A$.
\end{ex}

\section{Formal deformations}\label{sec:starproducts}
\noindent
A rich source of examples of ``noncommutative spaces'' is from deformation quantizations of Poisson manifolds. Only in this section, $C^\infty(M)$ will denote \emph{complex}-valued smooth functions on a (real) smooth manifold $M$.

\begin{df}\label{def:star}
A \emph{star product} on a Poisson manifold $(M,\{,\})$ is a $\C\h$-bilinear associative binary operation $\star$ on $C^\infty(M)\h$ of the form:
\begin{gather}\label{eq:star}
f\star g=\sum_{k=0}^\infty \hslash^k C_k(f,g),\qquad\forall\, f,g\in C^\infty(M),
\end{gather}
where each \mbox{$C_k\colon C^\infty(M)\times C^\infty(M)\to C^\infty(M)$} is a bi-differential operator
and for all $f,g\in C^\infty(M)$:
\begin{alignat*}{3}
& C_0(f,g) =fg \qquad && \text{is the pointwise multiplication},& \\
& C_1(f,g)-C_1(g,f) =2\mathrm{i}\left\{f,g\right\} \qquad && \text{is the Poisson bracket},& \\
&C_k(1,f)=0  \ \forall\, k\geq 1 \qquad && \text{($1$ is a neutral element for $\star$)}.&
\end{alignat*}
\end{df}

\noindent
From now on we will always assume that $C_1$ is antisymmetric,\footnote{This can be done without loss of generality: any star product is equivalent to one with $C_1$ antisymmetric \cite[Proposition~2.23]{GR}.} so that
\begin{gather*}
C_1(f,g)=\mathrm{i}\left\{f,g\right\}.
\end{gather*}
If we stop the sum \eqref{eq:star} at order $r\geq 1$ and work over the ring $\C[\hslash]/\ideal{\hslash^{r+1}}$ we get the notion of \emph{order $r$ deformation} of a Poisson manifold.

In the framework of deformation quantization, we can consider the problem of star products that are \emph{tangential} to submanifolds, and investigate under what conditions every derivation of the star product on the submanifold admits a prolongation (cf.~Section~\ref{sec:Poissonsub}). For 1st order deformations this becomes a problem of prolongation of Poisson vector fields, i.e., vector fields $X\in\mathfrak{X}(M)$ satisfying
\begin{gather*}
X\big(\{f,g\}\big)=\big\{X(f),g\big\}+\big\{f,X(g)\big\},\qquad\forall\, f,g\in C^\infty(M).
\end{gather*}
In Section~\ref{sec:coiso} we will consider short exact sequences of formal deformations coming from coisotropic reduction of a Poisson manifold.

\subsection[$\hslash$-linear derivations]{$\boldsymbol{\hslash}$-linear derivations}\label{sec:localh}
Let $A:=\big(C^\infty(M)\h,\star\big)$ be a deformation quantization of a Poisson manifold~$M$. When dealing with star products, we will only consider derivations of star products that are \mbox{$\hslash$-linear}.
An element of $\operatorname{Der}(A)$ will be then a formal power series
\begin{gather*}
D=\sum_{k=0}^\infty\hslash^kD_k
\end{gather*}
of differential operators on $M$ satisfying the Leibniz rule:
\begin{gather*}
D(f\star g)=(Df)\star g+f\star (Dg),\qquad\forall\, f,g\in C^\infty(M).
\end{gather*}
At order $0$ in $\hslash$ this means that $D_0$ is a derivation of $C^\infty(M)$, i.e., a vector field. At order $1$ we get
\begin{gather*}
D_1(fg)-(D_1f)g-f(D_1g)=C_1(D_0f, g)+C_1(f,D_0g)-D_0C_1(f,g).
\end{gather*}
Since in previous equality the left hand side is symmetric and the right hand side antisymmetric, we deduce that they must both vanish. Thus, $D_1$ must be a vector field as well, and $D_0$ must be a~Poisson vector field. If we are interested in first order deformations, this completely characterizes derivations.

\begin{lem}An $\varepsilon$-linear derivation of $\big(C^\infty(M)[\varepsilon]/\ideal{\varepsilon^2},\star\big)$ is a sum
\begin{gather*}
D_0+\varepsilon D_1
\end{gather*}
of a Poisson vector field $D_0$ and an arbitrary vector field $D_1$ on $M$.
\end{lem}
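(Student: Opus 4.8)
The plan is to extract the two constraints on $D = D_0 + \varepsilon D_1$ directly from the hypothesis that $D$ is an $\varepsilon$-linear derivation of $\bigl(C^\infty(M)[\varepsilon]/\ideal{\varepsilon^2},\star\bigr)$, and then observe that both conditions are exactly what is asserted. Writing the Leibniz rule $D(f\star g) = (Df)\star g + f\star(Dg)$ and expanding $\star = C_0 + \varepsilon C_1$ modulo $\varepsilon^2$, the order-$\varepsilon^0$ part reads $D_0(fg) = (D_0f)g + f(D_0g)$, so $D_0$ is a derivation of the commutative algebra $C^\infty(M)$, i.e., a vector field on $M$. This is the first half of the statement.

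Next I would read off the order-$\varepsilon^1$ part of the Leibniz identity, which gives
\begin{gather*}
D_1(fg) + D_0C_1(f,g) = (D_1f)g + f(D_1g) + C_1(D_0f,g) + C_1(f,D_0g).
\end{gather*}
Rearranging,
\begin{gather*}
D_1(fg) - (D_1f)g - f(D_1g) = C_1(D_0f,g) + C_1(f,D_0g) - D_0C_1(f,g).
\end{gather*}
This is precisely the computation already displayed in the text preceding the lemma (with $\hslash$ replaced by $\varepsilon$, and no higher-order corrections to worry about). As noted there, the left-hand side is symmetric under $f\leftrightarrow g$ while the right-hand side, since $C_1(f,g) = \mathrm{i}\{f,g\}$ is antisymmetric, is antisymmetric; hence both sides vanish. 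Vanishing of the left-hand side says $D_1(fg) = (D_1f)g + f(D_1g)$, so $D_1$ is a vector field; vanishing of the right-hand side says $D_0$ is a Poisson vector field, $D_0\{f,g\} = \{D_0f,g\} + \{f,D_0g\}$.

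To finish I would check the converse: given any vector field $D_1$ and any Poisson vector field $D_0$, the element $D_0 + \varepsilon D_1$ satisfies the $\varepsilon$-linear Leibniz rule for $\star$. This is a direct verification running the above expansion backwards — the order-$\varepsilon^0$ identity holds because $D_0$ is a vector field, and the order-$\varepsilon^1$ identity holds because the symmetric part is the Leibniz rule for $D_1$ and the antisymmetric part is the Poisson-derivation property of $D_0$, each of which we are now assuming. There is no real obstacle here; the only point requiring the slightest care is keeping track of which pieces are symmetric and which antisymmetric, and noting that truncation at $\varepsilon^2$ means no further orders contribute, so the two conditions are not merely necessary but sufficient.
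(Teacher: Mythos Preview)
Your proposal is correct and follows exactly the paper's approach: the lemma is stated immediately after the displayed order-$1$ identity and the symmetric/antisymmetric argument, and the paper simply remarks that for first-order deformations this ``completely characterizes derivations.'' Your write-up is in fact slightly more complete, since you spell out the converse direction that the paper leaves implicit.
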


On a symplectic manifold, the correspondence between derivations of a star product and Poisson (in this case \emph{symplectic}) vector fields holds at any order in~$\hslash$. Suppose $M$ is a symplectic manifold and denote by $\mathfrak{X}^{\mathrm{sym}}(M)\h$ the space of \emph{formal symplectic vector fields} on~$M$. Elements of $\mathfrak{X}^{\mathrm{sym}}(M)\h$ are formal power series
\begin{gather*}
X=\sum_{k=0}^\infty\hslash^kX_k
\end{gather*}
with $X_k$ a symplectic vector field on $M$ for all $k\geq 0$. Every symplectic vector field on a~contractible open set is Hamiltonian. We can then cover $M$ by contractible open subsets, and on each $U$ of this cover find functions $f_k^U\in C^\infty(U)$ such that
\begin{gather*}
X_k(g)|_U=\big\{f_k^U,g\big\}
\end{gather*}
for all $g\in C^\infty(M)$ and all $k\geq 0$. These functions are determined by $X$ up to an additive constant ($U$ is connected). We can then define a new function $Dg\in C^\infty(M)$ given, on each set~$U$ of this cover, by
\begin{gather*}
Dg|_U=\frac{1}{\hslash}\sum_{k\geq 0}\hslash^k\big(f_k^U\star g-g\star f_k^U\big).
\end{gather*}
For all $g$ we get a well-defined formal power series of global smooth functions~$Dg$, and a well define derivation $D$ of the star product. Such a derivation depends only on~$X$. A simple argument by induction shows that every derivation of the star product is in fact of this form:

\begin{thm}[{\cite[Proposition~3.5]{GR}}]\label{thm:24}
Every $\hslash$-linear derivation of a star product on a symplectic manifold $M$ corresponds to a formal symplectic vector field via the construction above.
\end{thm}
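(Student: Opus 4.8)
The plan is to prove surjectivity of the $\C\h$-linear assignment $\Phi\colon X\mapsto D_X$ described just above, from $\mathfrak{X}^{\mathrm{sym}}(M)\h$ to the space of $\hslash$-linear derivations of $A=\big(C^\infty(M)\h,\star\big)$, by an induction on the $\hslash$-adic order that strips off one ordinary symplectic vector field at a time; injectivity (hence the ``correspondence'' of the statement) will drop out of the same argument.

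First I would record two elementary facts about $\Phi$. It is $\C\h$-linear: replacing $X$ by $\hslash X$ replaces each local Hamiltonian $f_k^U$ by $\hslash f_k^U$, and the leading $\tfrac1\hslash$ in the definition of $D_X$ turns this into an overall factor $\hslash$, so $\Phi\big(X+\hslash^{n+1}Y\big)=\Phi(X)+\hslash^{n+1}\Phi(Y)$. Moreover, since $f\star g-g\star f=2\mathrm{i}\hslash\{f,g\}+O\big(\hslash^2\big)$ (because $C_0$ is symmetric and $C_1(f,g)-C_1(g,f)=2\mathrm{i}\{f,g\}$), the order-zero term of $D_X g$ comes only from the $k=0$ summand and equals $2\mathrm{i}\{f_0^U,g\}+O(\hslash)=2\mathrm{i}X_0(g)+O(\hslash)$; thus the order-zero part of $D_X$ is $2\mathrm{i}X_0$. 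In particular every symplectic vector field $Y$ on $M$ occurs as the order-zero part of some $\Phi(X)$ — take $X:=\tfrac1{2\mathrm{i}}Y$, constant in $\hslash$.

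Now let $D=\sum_{k\ge0}\hslash^k D_k\in\operatorname{Der}(A)$. The order-zero and order-one identities obtained by expanding the Leibniz rule for $\star$ in powers of $\hslash$ (exactly the computation carried out above) force $D_0$ to be a Poisson vector field, which on a symplectic manifold is the same as a symplectic vector field. Choose $X^{(0)}$, constant in $\hslash$, with $\big(\Phi X^{(0)}\big)_0=D_0$. Then $E:=D-\Phi\big(X^{(0)}\big)$ is an $\hslash$-linear derivation of $A$ sending $C^\infty(M)$ into $\hslash\,C^\infty(M)\h$, so $\hslash^{-1}E$ — defined on $C^\infty(M)$ by $f\mapsto\hslash^{-1}E(f)$ and extended $\C\h$-linearly — is again an $\hslash$-linear derivation of the \emph{same} star product (divide the Leibniz identity for $E$ by $\hslash$). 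Feeding $\hslash^{-1}E$ back into the previous step yields a symplectic vector field $X^{(1)}$ with $\big(\Phi X^{(1)}\big)_0=\big(\hslash^{-1}E\big)_0$, and iterating produces symplectic vector fields $X^{(0)},X^{(1)},X^{(2)},\dots$ such that, with $X_{\le n}:=\sum_{j=0}^n\hslash^j X^{(j)}$, one has $D\equiv\Phi\big(X_{\le n}\big)\pmod{\hslash^{n+1}}$ for all $n$ (using $\C\h$-linearity of $\Phi$ to reassemble the successive corrections). The $X_{\le n}$ are the $\hslash$-adic truncations of the formal symplectic vector field $X:=\sum_{j\ge0}\hslash^j X^{(j)}\in\mathfrak{X}^{\mathrm{sym}}(M)\h$, and passing to the $\hslash$-adic limit gives $D=\Phi(X)$. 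The identical stripping argument applied to $\Phi(X)=0$ shows $X=0$, so $\Phi$ is a bijection.

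The one step requiring care — and the main technical obstacle — is the passage from $E=D-\Phi\big(X^{(0)}\big)$ to $\hslash^{-1}E$: one must verify that dividing by $\hslash$ produces not merely a $\C\h$-linear endomorphism of $C^\infty(M)\h$ but a genuine derivation of the \emph{same} $\star$, so that the structural fact ``the order-zero part of any $\hslash$-linear $\star$-derivation is symplectic'' can be re-applied at every stage. This is immediate once one observes that $E$ maps $C^\infty(M)$ into $\hslash\,C^\infty(M)\h$ — which is exactly why $X^{(0)}$ was chosen so that $\big(\Phi X^{(0)}\big)_0=D_0$ — but it is precisely this feature that makes the recursion close up and forces the order-by-order bookkeeping rather than a one-shot argument.
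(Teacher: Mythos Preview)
The paper does not actually prove this theorem in full: it merely sketches the construction $X\mapsto D_X$ and then writes ``A simple argument by induction shows that every derivation of the star product is in fact of this form,'' deferring to \cite[Proposition~3.5]{GR}. Your proposal is precisely such an induction argument, carried out correctly: you peel off the order-zero symplectic vector field, subtract the corresponding $\Phi(X^{(0)})$, observe that the remainder divided by $\hslash$ is again a $\star$-derivation, and iterate. This is the standard $\hslash$-adic recursion one expects here and is the same approach the paper alludes to (and that \cite{GR} contains).

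One cosmetic remark: your justification of $\C\h$-linearity of $\Phi$ (``replacing $X$ by $\hslash X$ replaces each $f_k^U$ by $\hslash f_k^U$'') is phrased a little loosely --- what actually happens is that the $k$-th component of $\hslash X$ is $X_{k-1}$, so its local Hamiltonian is $f_{k-1}^U$ --- but the conclusion $\Phi(\hslash X)=\hslash\Phi(X)$ is of course correct once you reindex the sum. Similarly, your normalization $X^{(0)}=\tfrac{1}{2\mathrm{i}}D_0$ tacitly uses that we are over $\C\h$ (so complex multiples of symplectic vector fields are allowed), which is consistent with the paper's conventions in this section.
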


If $H^1(M,\R)=0$ every symplectic vector field is Hamiltonian, and every derivation of the star product is essentially inner,\footnote{Inner except for the factor $\frac{1}{\hslash}$ in front.} given by $\frac{1}{\hslash}\left[f\stackrel{\star}{,} \right]$ for some $f\in C^\infty(M)\h$.

\subsection{Deformations of Poisson submanifolds}\label{sec:Poissonsub}
Suppose
\begin{gather}\label{eq:formalpi}
\pi\colon \ \big(C^\infty(M)\h,\star\big)\to \big(C^\infty(N)\h,\star\big)
\end{gather}
is a homomorphisms between deformation quantizations of two Poisson manifolds $M$ and $N$. We will assume that $\pi$ is $\hslash$-linear, i.e., of the form
\begin{gather}\label{eq:formalpiseries}
\pi=\sum_{k=0}^\infty\hslash^k\pi_k
\end{gather}
where each $\pi_k$ maps $C^\infty(M)$ to $C^\infty(N)$ and is extended to formal power series by $\C\h$-linearity.

If we look at the condition $\pi(f\star g)=\pi(f)\star\pi(g)$ at order $0$ we get that $\pi_0$ must be a~homomorphism between the commutative algebras $C^\infty(M)$ and $C^\infty(N)$, hence the pullback of a~smooth map $\varphi_0\colon N\to M$ \cite[Corollary~35.10]{KMS}. At order $1$ we get:
\begin{gather*}
\varphi_0^*\big(\{f,g\}\big)-\left\{\varphi_0^*f,\varphi_0^*g\right\}=
\pi_1(f)\pi_1(g)-\pi_1(fg).
\end{gather*}
Since the symmetric and antisymmetric part must both vanish, we deduce that $\varphi_0$ is a Poisson map and $\pi_1$ a homomorphism, hence the pullback of a smooth map $\varphi_1\colon N\to M$.

\begin{lem}The map \eqref{eq:formalpiseries} is surjective if and only if $\pi_0\colon C^\infty(M)\to C^\infty(N)$ is surjective.
\end{lem}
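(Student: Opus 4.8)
The plan is to prove both implications directly from the power-series structure of $\pi$.

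\textbf{The easy direction.} Suppose $\pi=\sum_{k\geq 0}\hslash^k\pi_k$ is surjective. I would argue that the induced map on the ``classical limit'' $C^\infty(M)\to C^\infty(N)$, obtained by setting $\hslash=0$ (equivalently, reducing modulo the ideal $\hslash C^\infty(M)\h$), is exactly $\pi_0$. Since $\pi$ is $\C\h$-linear and surjective, and the reduction mod $\hslash$ is a surjective algebra homomorphism from $C^\infty(N)\h$ onto $C^\infty(N)$, the composite $C^\infty(M)\to C^\infty(M)\h\xrightarrow{\pi}C^\infty(N)\h\to C^\infty(N)$ is surjective; but this composite is visibly $\pi_0$. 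Hence $\pi_0$ is surjective.

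\textbf{The main direction.} Conversely, assume $\pi_0$ is surjective, and let $g=\sum_{k\geq 0}\hslash^k g_k\in C^\infty(N)\h$ be given; I want to produce $f\in C^\infty(M)\h$ with $\pi(f)=g$. I would build $f=\sum_{k\geq 0}\hslash^k f_k$ order by order. At order $0$: since $\pi_0$ is surjective, pick $f_0\in C^\infty(M)$ with $\pi_0(f_0)=g_0$. Inductively, suppose $f_0,\dots,f_{n-1}$ have been chosen so that $\pi\big(\sum_{k=0}^{n-1}\hslash^k f_k\big)\equiv g\pmod{\hslash^n}$. The coefficient of $\hslash^n$ in $g-\pi\big(\sum_{k=0}^{n-1}\hslash^k f_k\big)$ is some explicit function $h_n\in C^\infty(N)$ (built from $g_n$ and from the $\pi_j(f_k)$ with $j+k=n$, $k<n$); by surjectivity of $\pi_0$ choose $f_n\in C^\infty(M)$ with $\pi_0(f_n)=h_n$. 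Then adding $\hslash^n f_n$ cancels the order-$n$ discrepancy without disturbing lower orders, because $\pi_0(f_n)$ is precisely the new order-$n$ contribution of $\hslash^n f_n$ under $\pi$ and all other contributions of $\hslash^n f_n$ are of order $>n$. The limiting series $f$ then satisfies $\pi(f)=g$. Note this argument uses only that $\pi_0$ is a surjective $K$-linear (indeed algebra) map; the higher $\pi_k$ and associativity of $\star$ play no role in the surjectivity statement itself.

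\textbf{Expected obstacle.} There is no serious obstacle; the only point requiring a little care is the bookkeeping in the inductive step, namely checking that adjoining $\hslash^n f_n$ affects the image only in degrees $\geq n$ and contributes exactly $\hslash^n\pi_0(f_n)$ in degree $n$. This is immediate from $\C\h$-linearity of $\pi$ and the expansion $\pi(\hslash^n f_n)=\hslash^n\sum_{j\geq 0}\hslash^j\pi_j(f_n)$. One should also observe that the whole argument is the standard ``$\hslash$-adic completeness'' lifting lemma and goes through verbatim if one replaces $C^\infty(M)\h$ by any $\hslash$-adically complete $\C\h$-module map with the stated form.
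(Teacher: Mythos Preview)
Your proposal is correct and follows essentially the same approach as the paper. The paper's proof of ``$\Rightarrow$'' reads off the order-$0$ coefficient of $\pi(f)=g$ just as you do (your quotient-by-$\hslash$ phrasing is the same thing in slightly more abstract language), and for ``$\Leftarrow$'' the paper writes down exactly your inductive recursion in closed form, $\pi_0(f_k)=g_k-\sum_{j=1}^{k}\pi_j(f_{k-j})$.
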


\begin{proof}
``$\Rightarrow$''
If $\pi$ is surjective, for all $g=\sum\limits_{k=0}^\infty\hslash^kg_k$, with $g_k\in C^\infty(N)$, there exists $f=\sum\limits_{k=0}^\infty\hslash^kf_k$, with $f_k\in C^\infty(M)$, such that
\begin{gather*}
\pi(f)=\pi_0(f_0)+O(\hslash)=g=g_0+O(\hslash).
\end{gather*}
Thus $\pi_0(f_0)=g_0$ and $\pi_0$ is surjective.

``$\Leftarrow$''
Suppose $\pi_0$ is surjective and let $g=\sum\limits_{k=0}^\infty\hslash^kg_k$, with $g_k\in C^\infty(N)$. It follows from surjectivity of $\pi_0$ that the recursive equation
\begin{gather*}
\pi_0(f_k)=g_k-\sum_{j=1}^k\pi_j(f_{k-j})
\end{gather*}
admits a solution $f=\sum\limits_{k=0}^\infty\hslash^kf_k$, with $f_k\in C^\infty(M)$. Such a formal power series satisfies by construction $\pi(f)=g$, hence $\pi$ is surjective.
\end{proof}

If $\pi_0$ is surjective, it follows from Theorem \ref{thm:main} that $\varphi_0(N)$ is a closed embedded Poisson submanifold of $M$.

Conversely, suppose $S$ is a closed embedded submanifold of a Poisson manifold $M$ and $J$ is the vanishing ideal of $S$. If $J$ is a \emph{Poisson ideal}, i.e., $\{J,f\}\subset J$ for all $f\in C^\infty(M)$, the Poisson structure of $M$ induces one on $S$ and $S$ becomes a Poisson submanifold of $M$.\footnote{Note that $J$ being a Poisson ideal means that all Hamiltonian vector fields $X_f:=\{f,\,\cdot\,\}$ belong to $\operatorname{Der}_{\imath^*}(C^\infty(M))$.}

A star product \eqref{eq:star} on $M$ is \emph{tangential} to $S$ if $C_k(J,f)\subset J$ for all $k\geq 1$ and for all $f\in C^\infty(M)$. If such a condition is satisfied, we get by restriction a star product on $S$. In such a situation, the pullback of the inclusion
$\imath\colon S\to M$ extends by $\C\h$-linearity to a surjective homomorphism
\begin{gather}\label{eq:formalder}
\imath^*\colon \ A:=\big(C^\infty(M)\h,\star\big)\to B:=\big(C^\infty(S)\h,\star\big).
\end{gather}
This is an instance of surjective homomorphism~\eqref{eq:formalpi} where $\pi=\pi_0=\imath^*$ has no higher order terms.
Among the examples in this class we find regular coadjoint orbits of compact Lie groups.

\begin{ex}[regular coadjoint orbits] Let $G$ be a compact connected Lie group, $\imath\colon \mathcal{O}\hookrightarrow\mathfrak{g}^*$ a regular coadjoint orbit, and equip $C^\infty(\mathfrak{g}^*)\h$ with BCH star product $\star_{\hspace{1pt}\text{\tiny BCH}}$. Then there exists a star product $\star$ on $\mathcal{O}$ and a series of $\mathfrak{g}$-invariant differential operators $T=\id+\sum\limits_{k=1}^\infty\hslash^kT_k$ on~$\mathfrak{g}^*$ such that
\begin{gather*}
\imath^*\circ T\colon \ \big(C^\infty(\mathfrak{g}^*)\h,\star_{\hspace{1pt}\text{\tiny BCH}}\big)\to \big(C^\infty(\mathcal{O})\h,\star\big)
\end{gather*}
is a homomorphism \cite[Theorem~5.2]{BBGW}. If we define a new star product $\star$ on $\mathfrak{g}^*$ (equivariantly equivalent to $\star_{\hspace{1pt}\text{\tiny BCH}}$) by
\begin{gather*}
f\star g := T\big(T^{-1}f\star_{\hspace{1pt}\text{\tiny BCH}} T^{-1}g\big),
\end{gather*}
then the new star product is tangential to $\mathcal{O}$, and
\begin{gather*}
\pi:=\imath^*\colon \ \big(C^\infty(\mathfrak{g}^*)\h,\star\big)\to \big(C^\infty(\mathcal{O})\h,\star\big)
\end{gather*}
is a surjective homomorphism. The coadjoint orbits of a compact connected Lie group are simply connected \cite[Theorem~2.3.7]{Fil}, thus $H^1(\mathcal{O},\R)=0$, derivations of $(C^\infty(\mathcal{O})\h,\star)$ are essentially inner and $\pi_*$ is surjective.
\end{ex}

One could conjecture that~\eqref{eq:formalder} is always a coembedding if $S$ is symplectic, since at least locally the derivations are essentially inner (Theorem~\ref{thm:24}). Unfortunately this is not the case, as shown by the next lemma.

\begin{lem}[obstructions]\label{lemma:obstr}
Consider a surjective homomorphism like in \eqref{eq:formalder}.
\begin{itemize}\itemsep=0pt
\item[$(a)$] If there is a Poisson vector field on $S$ that cannot be extended to a~Poisson vector field on~$M$, then \eqref{eq:formalder} is not a coembedding.
\end{itemize}
Assume that both $M$ and $S$ are symplectic. Then:
\begin{itemize}\itemsep=0pt
\item[$(b)$] the morphism \eqref{eq:formalder} induces a linear map:
\begin{gather}\label{eq:H1map}
H^1(M,\R)\to H^1(S,\R).
\end{gather}
If \eqref{eq:formalder} is a coembedding, the map~\eqref{eq:H1map} is surjective.
\item[$(c)$] If $H^1(M,\R)=0$, then \eqref{eq:formalder} is a coembedding if and only if $H^1(S,\R)=0$.
\end{itemize}
\end{lem}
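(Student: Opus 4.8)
The plan is to work throughout with the reformulation that \eqref{eq:formalder} is a coembedding precisely when every $\hslash$-linear derivation of $B=\big(C^\infty(S)\h,\star\big)$ admits a lift in $\operatorname{Der}_{\imath^*}(A)$, and to exploit two facts recorded in Section~\ref{sec:localh}: the leading coefficient $D_0$ of an $\hslash$-linear derivation $D$ of a star product is a Poisson vector field, and if $D\in\operatorname{Der}_{\imath^*}(A)$ then, reading the identity $\imath^*\circ D=\imath^*_*(D)\circ\imath^*$ at order $\hslash^0$ (here $\imath^*=\pi_0$ is the ordinary restriction and $\ker\imath^*$ has $\hslash^0$-part $I(S)$), the vector field $D_0$ is tangent to $S$ and restricts along $S$ to the leading term of $\imath^*_*(D)$.

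For part $(a)$ I would prove the contrapositive: if \eqref{eq:formalder} is a coembedding, then every Poisson vector field $X$ on $S$ extends to a Poisson vector field on $M$. The first step is to realise $X$ as the leading term $D_0$ of some $\hslash$-linear derivation $D$ of $\big(C^\infty(S)\h,\star\big)$; one builds $D=X+\hslash D_1+\hslash^2 D_2+\cdots$ by induction on the order, the obstruction at each stage being a Hochschild $2$-cocycle that, by the Hochschild--Kostant--Rosenberg theorem, is killed by a suitable choice of the next coefficient together with an adjustment of the lower-order ones (when $S$ is symplectic this is exactly the construction of Theorem~\ref{thm:24}, applied to the formal symplectic vector field with no higher-order corrections). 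Lifting $D$ to $\widetilde D\in\operatorname{Der}_{\imath^*}(A)$ and taking leading terms then produces $\widetilde D_0$, a Poisson vector field on $M$ that is tangent to $S$ and restricts along $S$ to $X$ --- the desired extension, contradicting the hypothesis.

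For part $(b)$, with $M,S$ symplectic and $\imath^*\omega_M=\omega_S$, I would use Theorem~\ref{thm:24} to identify derivations of each star product with formal symplectic vector fields, and the assignment $D\mapsto[\iota_{D_0}\omega]$, which kills the essentially inner derivations $\tfrac1\hslash[f\stackrel{\star}{,}\cdot]$ (whose leading terms are Hamiltonian), to identify \eqref{eq:H1map} with the de~Rham pullback $\imath^*\colon H^1(M,\R)\to H^1(S,\R)$ of the inclusion. To see that this is onto when \eqref{eq:formalder} is a coembedding: given $[\alpha]\in H^1(S,\R)$, the vector field $Y$ defined by $\iota_Y\omega_S=\alpha$ is symplectic, hence the leading term of some derivation $D$ of the star product on $S$; a lift $\widetilde D\in\operatorname{Der}_{\imath^*}(A)$ has leading term $\widetilde D_0$ symplectic on $M$ and restricting along $S$ to $Y$, so $\imath^*\big[\iota_{\widetilde D_0}\omega_M\big]=[\iota_Y\omega_S]=[\alpha]$.

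For part $(c)$ the forward direction is immediate from $(b)$: a coembedding makes \eqref{eq:H1map} onto, and its source $H^1(M,\R)$ is zero, whence $H^1(S,\R)=0$. For the converse, $H^1(S,\R)=0$ makes every symplectic vector field on $S$ Hamiltonian, hence every $\hslash$-linear derivation of the star product on $S$ essentially inner, $D=\tfrac1\hslash[f\stackrel{\star}{,}\cdot]$ with $f\in C^\infty(S)\h$; picking $\widetilde f\in C^\infty(M)\h$ with $\imath^*\widetilde f=f$, the operator $\widetilde D:=\tfrac1\hslash[\widetilde f\stackrel{\star}{,}\cdot]$ is a well-defined $\hslash$-linear derivation of $\big(C^\infty(M)\h,\star\big)$ (the commutator is divisible by $\hslash$ because the product is commutative at order $\hslash^0$), it maps $J=\ker\imath^*$ to itself since $J$ is a two-sided ideal, and $\imath^*_*(\widetilde D)=D$ because $\imath^*$ is an algebra homomorphism --- so \eqref{eq:formalder} is a coembedding. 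The step I expect to be the real obstacle is the one used in $(a)$: surjectivity of the principal-symbol map from $\hslash$-linear derivations of the star product on $S$ onto Poisson vector fields on $S$. This is automatic in the symplectic case (Theorem~\ref{thm:24}), but in general it rests on the order-by-order vanishing argument sketched above (or on formality-type results), and this is where the work lies.
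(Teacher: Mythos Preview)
Your argument is correct and parallels the paper's proof closely. Two differences are worth noting. In part~(b) the paper constructs the map \eqref{eq:H1map} by identifying $H^1\simeq\mathfrak{X}^{\mathrm{sym}}/\mathfrak{X}^{\mathrm{ham}}$ on each side and pushing symplectic vector fields on $M$ through the classical $\pi_*$; for this to be defined on all of $H^1(M,\R)$ one must know that \emph{every} symplectic vector field on $M$ preserves $J$, and the paper establishes this via the observation that on a symplectic manifold Hamiltonian vector fields generate $\mathfrak{X}(M)$ as a $C^\infty(M)$-module (so any $Y=\sum f_iX_{g_i}$ preserves the Poisson ideal $J$). Your identification of \eqref{eq:H1map} with the de~Rham pullback $\imath^*$ sidesteps this verification entirely, and your surjectivity argument (lift a derivation, read off the leading symplectic vector field on $M$, pull back its contraction with $\omega_M$) then coincides with the paper's appeal to part~(a). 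In part~(a) the paper does not carry out the inductive construction you sketch but simply cites \cite[Proposition~3.1]{Sha} for the fact that every Poisson vector field on $S$ arises as the $\hslash^0$-term of some derivation of the star product; your outline is in the right spirit, but as you yourself anticipate, the order-by-order obstruction is a Hochschild $2$-class and does not vanish for free on a general Poisson manifold, so additional input (formality, or the cited reference) is genuinely needed there. Part~(c) is handled identically in both treatments.
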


\begin{proof}
(a) Let $\pi=\imath^*$ be the pullback of the inclusion as in \eqref{eq:formalder}, and assume that $\pi_*$ is surjective on derivations (of the star products). For any Poisson vector field $\widetilde{D}_0$ on $S$
there is a derivation $\widetilde{D}$ of the star product on $S$ such that $\widetilde{D}=\widetilde{D}_0+O(\hslash)$ \cite[Proposition~3.1]{Sha}. By hypothesis, there exists a derivation $D=D_0+O(\hslash)$ of the star product on $M$ such that
\begin{gather*}
\pi_*(D)=\widetilde{D}.
\end{gather*}
This in particular means that $\widetilde{D}_0$ is the restriction to $S$ of the vector field $D_0$ on $M$. Being the zeroth order part of a derivation (of a star product), $D_0$ is a Poisson vector field on $M$. Thus, if~\eqref{eq:formalder} is a coembedding, every Poisson vector field $\widetilde{D}_0$ on $S$ can be extended to a Poisson vector field $D_0$ on~$M$.

(b) Recall that on a symplectic manifold $M$ the 1st de Rham cohomology is isomorphic to the quotient of symplectic by Hamiltonian vector fields:
\begin{gather*}
H^1(M,\R)\simeq \mathfrak{X}^{\mathrm{sym}}(M)/\mathfrak{X}^{\mathrm{ham}}(M).
\end{gather*}
The map $\pi_*$ sends symplectic vector fields into symplectic vector fields and Hamiltonian into Hamiltonian, and it is surjective on Hamiltonian vector fields.\footnote{This \looseness=-1 is true even when $\pi$ is not the pullback of the inclusion and in full generality follows from the fact that Hamiltonian vector fields are the order zero part of essentially inner derivations, which can always be lifted. For every $f\in C^\infty(S)$, the essentially inner derivation $\frac{1}{\hslash}\left[f\stackrel{\star}{,}\cdot\,\right]=\{f,\,\cdot\,\}+O(\hslash)$ of the star product on $S$ and can be lifted to a derivation $\frac{1}{\hslash}\big[\widetilde{f}\stackrel{\star}{,}\cdot\,\big]=\{\widetilde{f},\,\cdot\,\}+O(\hslash)$ of the star product on $M$ by choosing an extension $\widetilde{f}\in C^\infty(M)$ of $f$. The map $\pi_*$ sends the Hamiltonian vector field $\{\widetilde{f},\,\cdot\,\}$ to $\{f,\,\cdot\,\}$, and it is then surjective on Hamiltonian vector fields.}
If $\pi$ is a coembedding, $\pi_*$ is surjective on symplectic vector fields as well (as we proved at point~(a)). The only thing we have to prove is that every symplectic vector field~$Y$ is in the domain of $\pi_*$, i.e., satisfies~$Y(J)\subset J$ where~$J$ is the vanishing ideal of~$S$. In fact, we are going to prove that this is true for any vector~field.

Let $\omega$ be the symplectic form on~$M$ and $Y\in\mathfrak{X}(M)$. Any $1$-form on $M$ can be written as a~finite sum $\sum\limits_{\mathrm{finite}}f_i{\rm d}g_i$
for some $f_i,g_i\in C^\infty(M)$. Thus
\begin{gather*}
\omega(Y,\,\cdot\,)=\sum_{\mathrm{finite}}f_idg_i=
\sum_{\mathrm{finite}}f_i\omega(X_{g_i},\,\cdot\,)=
\sum_{\mathrm{finite}}\omega(f_iX_{g_i},\,\cdot\,),
\end{gather*}
for some \looseness=-1 $f_i$, $g_i$ and where $X_{g_i}$ denotes the Hamiltonian vector field of $g_i$. By non-degeneracy of~$\omega$:
\begin{gather*}
Y=\sum_{\mathrm{finite}}f_iX_{g_i}.
\end{gather*}
In other words, $\mathfrak{X}^{\mathrm{ham}}(M)$ generates $\mathfrak{X}(M)$ as a $C^\infty(M)$-module.

Now, $X_{g_i}(J)\subset J$ since $J$ is a Lie ideal, and $f_iX_{g_i}(J)\subset J$ since $J$ is an associative ideal, hence the thesis: $Y(J)\subset J$.

(c) is a simple corollary of (b). If $H^1(S,\R)=0$, then up to a factor $1/\hslash$ every derivation of $B$ is inner and $\pi_*$ is surjective on inner derivations. If $H^1(S,\R)\neq 0$, then~\eqref{eq:H1map} cannot be surjective and~\eqref{eq:formalder} cannot be a coembedding.
\end{proof}

Examples of pairs of symplectic manifolds $M,S$ with $S$ (closed embedded) symplectic submanifold of $M$, $H^1(M,\R)=0$ and $H^1(S,\R)\neq 0$ can be constructed as follows. Take $M=\CP^n$ a complex projective space with standard symplectic structure and $S$ any complex submanifold: $\CP^n$ has vanishing cohomology in odd degree, and complex submanifolds are symplectic submanifolds. Now it is not difficult to find examples where $S$ is not simply connected: take any Riemann surface with genus $\geq 1$.

An easier and explicit example of quotient map \eqref{eq:formalder} that is not a coembedding is the following.

\begin{ex}\label{example:28} On $M:=\R^2$ with Cartesian coordinates $(x,y)$ consider the commuting vector fields
\begin{gather*}
X:=\frac{\partial}{\partial x},\qquad
Y:=y\frac{\partial}{\partial y},
\end{gather*}
the Poisson structure given by the bivector field $X\wedge Y$, and the associated Weyl-type star product
\begin{gather}\label{eq:kminstar}
f\star g:=\mu\circ e^{\mathrm{i}\hslash(X\otimes Y-Y\otimes X)}(f\otimes g),\qquad\forall\, f,g\in C^\infty\big(\R^2\big),
\end{gather}
where $\mu$ is the pointwise multiplication map.

Embed $S:=\R$ in $\R^2$ as horizontal axis. Then $\star$ is tangential to $S$ and we have a surjective homomorphism
\begin{gather}\label{eq:commsubman}
\pi\colon \ A:=\big(C^\infty\big(\R^2\big)\h,\star\big)\to B:=C^\infty(\R)\h
\end{gather}
where  the product on the right is the $\C\h$-linear extension of the pointwise product. A vector field
\begin{gather*}
D_0=a\frac{\partial}{\partial x}+b\frac{\partial}{\partial y},\qquad a,b\in C^\infty\big(\R^2\big),
\end{gather*}
is a Poisson vector field on $\R^2$ if and only if
\begin{gather*}
X(a)+Y(b)=0.
\end{gather*}
This implies $\frac{\partial a}{\partial x}|_{y=0}=0$, so $a(x,0)=c$ is constant on the horizontal axis and
\begin{gather*}
\pi_*(D)=c\frac{\partial}{\partial x}+O(\hslash).
\end{gather*}
The (Poisson) vector field $x\frac{\partial}{\partial x}$ on $\R$ is not the restriction of any Poisson vector field on $\R^2$. Using Lemma \ref{lemma:obstr}(c) we conclude that the map \eqref{eq:commsubman} is not a coembedding.
\end{ex}

\begin{rem}
Note that, similarly to Example \ref{ex:no2}, in Example \ref{example:28} one has
\begin{gather*}
x\star y-y\star x=2\mathrm{i}\hslash\, y.
\end{gather*}
Replacing the formal parameter by a non-zero real number,
$\C[x,y]$ with star product \eqref{eq:kminstar}
becomes an algebra isomorphic to the complexification of $\mathcal{U}(\mathfrak{sb}(2,\R))$.
\end{rem}

\subsection{Coisotropic reduction}\label{sec:coiso}
A more general class of examples of surjective homomorphisms of formal deformations comes from coisotropic reduction. Let us review the classical notion of phase space reduction from an algebraic point of view.
Suppose $M$ is a Poisson manifold, that we interpret as a phase space of a physical system, and imagine that the system is constrained to move in a (closed embedded) submanifold $S$ of $M$. To obtain a phase space which represents in some sense the true degrees of freedom of the system, we can then perform phase space reduction.
Recall that $S$ is a \emph{coisotropic} submanifold of $M$ if the vanishing ideal
\begin{gather*}
J:=I(S)=\big\{f\in C^\infty(M)\colon f|_S=0\big\}
\end{gather*}
is closed under the Poisson bracket. Let
\begin{gather*}
A:=\big\{f\in C^\infty(M)\colon  \{f,J\}\subset J \big\}
\end{gather*}
be the \emph{normalizer} of $J$.
Since $J$ is closed under Poisson bracket, $J\subset A$. Since $f\mapsto -X_f:=- \{f,\,\cdot\,\}$ is a Lie algebra morphism, $A$ is closed under Poisson bracket; since $X_{fg}=fX_g+gX_f$ and $J$ is an ideal, $A$ is closed under pointwise product: it is a Poisson subalgebra of $\big(C^\infty(M),\{\,,\,\}\big)$. Finally, by construction $J$ is a Poisson ideal in $A$.\footnote{The normalizer is indeed the largest Poisson subalgebra of $\big(C^\infty(M),\{\,,\,\}\big)$ containing $J$ as a Poisson ideal.}
We thus have an exact sequence \eqref{eq:shortexact} of \emph{Poisson algebra maps}, where the quotient algebra $B$ is interpreted as algebra of smooth functions on what we would call geometrically the reduced phase space $M_{\mathrm{red}}$.\footnote{Note that if $S$ is a symplectic submanifold, then $A=C^\infty(M)$ and $M_{\mathrm{red}}\simeq S$.}

``Good'' star products on $M$ induce formal deformations of the constrained ideal, normalizer and reduced phase space, fitting an exact sequence:
\begin{gather}\label{eq:coisotropic}
0\to (J\h,\star)\to (A\h,\star) \xrightarrow{\pi} \big(C^\infty(M_{\mathrm{red}})\h,\star\big)\to 0.
\end{gather}
A procedure that always works when $S$ has codimension $1$ in $M$ is in \cite{Glo}.
For a general discussion of the problem one can see the review \cite{Bor} and references therein. A more recent ``categorical'' approach is in~\cite{DEW}.

Since symplectic submanifolds are special examples of this construction, we cannot expect~\eqref{eq:coisotropic} to be always a coembedding.

As a concrete example one can take $M:=\C^{n+1}\setminus\{0\}$ with standard symplectic structure. The submanifold $S:=\mathbb{S}^{2n+1}$ is then coisotropic. Let $\partial_\theta$ be the vector field on $M$ (tangent to~$S$) generating the obvious $U(1)$ action given by multiplication of all complex coordinates by the same phase. The normalizer $J$ is given by those functions $f\in C^\infty(M)$ such that $\partial_\theta f$ vanishes on~$S$. The quotient algebra is isomorphic to the algebra of smooth functions on $\mathbb{S}^{2n+1}$ that are $U(1)$-invariant, that we identify with smooth functions on $M_{\mathrm{red}}=\CP^n$.
It is shown in \cite{Glo} that Wick star product on $M$ can be reduced to $\CP^n$ and one has a sequence \eqref{eq:coisotropic} of formal deformations. In this example $\pi_*$ is surjective again for trivial reasons: $M_{\mathrm{red}}$ is simply connected and all derivations are essentially inner.

One could enlarge the class of examples by looking at reductions coming from an action of a~Poisson--Lie group on a Poisson manifold~\cite{DNE}, or at coisotropic submanifolds of Jacobi manifolds (rather than Poisson)~\cite{LOTV}. It is also worth mentioning the paper \cite{CF}, where the authors prove a relative version of Kontsevich's formality theorem, involving formal deformations of pairs of a manifold and a submanifold (covering in particular the case of Poisson manifold together with a coisotropic submanifold). It could be interesting to see if there is any connection with submanifold algebras. This investigation is postponed to a future work.

\subsection*{Acknowledgements}
I would like to thank Alessandro De Paris for suggesting Example~\ref{ex:cross2} and Chiara Esposito for her comments on a preliminary version of the paper. A special thanks goes to the anonymous referees for carefully reading the paper and suggesting some interesting future research lines.

\pdfbookmark[1]{References}{ref}
\LastPageEnding


\begin{thebibliography}{99}
\footnotesize\itemsep=-0.5pt

\bibitem{AN}
Arnlind J., Norkvist A.T., Noncommutative minimal embeddings and morphisms of
 pseudo-{R}iemannian calculi, \href{https://arxiv.org/abs/1906.03885}{arXiv:1906.03885}.

\bibitem{BBGW}
Bieliavsky P., Gutt S., Bordemann M., Waldmann S., Traces for star products on
 the dual of a {L}ie algebra, \href{https://doi.org/10.1142/S0129055X03001643}{\textit{Rev. Math. Phys.}} \textbf{15} (2003),
 425--445, \href{https://arxiv.org/abs/math.QA/0202126}{arXiv:math.QA/0202126}.

\bibitem{BF}
Billig Y., Futorny V., Lie algebras of vector fields on smooth affine
 varieties, \href{https://doi.org/10.1080/00927872.2017.1412456}{\textit{Comm. Algebra}} \textbf{46} (2018), 3413--3429,
 \href{https://arxiv.org/abs/1705.05900}{arXiv:1705.05900}.

\bibitem{Bla}
Blackadar B., Operator algebras. Theory of $C^*$-algebras and von Neumann
 algebras, \textit{Encyclopaedia of Mathe\-ma\-tical Sciences}, Vol.~122,
 \href{https://doi.org/10.1007/3-540-28517-2}{Springer-Verlag}, Berlin, 2006.

\bibitem{Bor}
Bordemann M., ({B}i)modules, morphisms, and reduction of star-products: the
 symplectic case, foliations, and obstructions, \textit{Trav. Math.}, Vol.~16, University of Luxembourg,
 Luxembourg, 2005, 9--40.

\bibitem{CD}
Carotenuto A., D\c{a}browski L., Spin geometry of the rational noncommutative
 torus, \href{https://doi.org/10.1016/j.geomphys.2019.05.008}{\textit{J.~Geom. Phys.}} \textbf{144} (2019), 28--42, \href{https://arxiv.org/abs/1804.06803}{arXiv:1804.06803}.

\bibitem{CF}
Cattaneo A.S., Felder G., Relative formality theorem and quantisation of
 coisotropic submanifolds, \href{https://doi.org/10.1016/j.aim.2006.03.010}{\textit{Adv. Math.}} \textbf{208} (2007), 521--548,
 \href{https://arxiv.org/abs/math.QA/0501540}{arXiv:math.QA/0501540}.

\bibitem{Con}
Connes A., Noncommutative geometry, Academic Press, Inc., San Diego, CA, 1994.

\bibitem{CM}
Connes A., Marcolli M., Noncommutative geometry, quantum fields and motives,
 \textit{American Mathematical Society Colloquium Publications}, Vol.~55,
 Amer. Math. Soc., Providence, RI, 2008.

\bibitem{DNE}
De~Nicola A., Esposito C., Reduction of pre-{H}amiltonian actions,
 \href{https://doi.org/10.1016/j.geomphys.2016.03.017}{\textit{J.~Geom. Phys.}} \textbf{115} (2017), 178--190, \href{https://arxiv.org/abs/1507.08821}{arXiv:1507.08821}.

\bibitem{DEW}
Dippell M., Esposito C., Waldmann S., Coisotropic triples, reduction and
 classical limit, \href{https://doi.org/10.25537/dm.2019v24.1811-1853}{\textit{Doc. Math.}} \textbf{24} (2019), 1811--1853,
 \href{https://arxiv.org/abs/1812.09703}{arXiv:1812.09703}.

\bibitem{Dix}
Dixmier J., Enveloping algebras, \textit{North-Holland Mathematical Library},
 Vol.~14, North-Holland Publishing Co., Amsterdam~-- New York~-- Oxford, 1977.

\bibitem{DK}
Drozd Yu.A., Kirichenko V.V., Finite-dimensional algebras, \href{https://doi.org/10.1007/978-3-642-76244-4}{Springer-Verlag},
 Berlin, 1994.

\bibitem{DV}
Dubois-Violette M., D\'erivations et calcul diff\'erentiel non commutatif,
 \textit{C.~R.~Acad. Sci. Paris S\'er.~I Math.} \textbf{307} (1988), 403--408.

\bibitem{DV2}
Dubois-Violette M., Noncommutative differential geometry, quantum mechanics and
 gauge theory, in Dif\-fe\-ren\-tial Geometric Methods in Theoretical Physics
 ({R}apallo, 1990), Editors C.~Bartocci, U.~Bruzzo, R.~Cianci, \textit{Lecture Notes in Phys.}, Vol.~375, \href{https://doi.org/10.1007/3-540-53763-5_42}{Springer}, Berlin, 1991, 13--24.

\bibitem{DV3}
Dubois-Violette M., Lectures on graded differential algebras and noncommutative
 geometry, in Noncommutative Differential Geometry and its Applications to
 Physics ({S}honan, 1999), Editors Y.~Maeda, H.~Moriyoshi, H.~Omori, D.~Sternheimer, T.~Tate, S.~Watamura, \textit{Math. Phys. Stud.}, Vol.~23, \href{https://doi.org/10.1007/978-94-010-0704-7}{Kluwer Acad. Publ.}, Dordrecht, 2001, 245--306, \href{https://arxiv.org/abs/math.QA/9912017}{arXiv:math.QA/9912017}.

\bibitem{DM}
Dubois-Violette M., Michor P.W., D\'erivations et calcul diff\'erentiel non
 commutatif.~{II}, \textit{C.~R.~Acad. Sci. Paris S\'er.~I Math.} \textbf{319}
 (1994), 927--931, \href{https://arxiv.org/abs/hep-th/9406166}{arXiv:hep-th/9406166}.

\bibitem{Eis}
Eisenbud D., Commutative algebra with a view toward algebraic geometry,
 \href{https://doi.org/10.1007/978-1-4612-5350-1}{\textit{Graduate Texts in Mathematics}}, Vol.~150, Springer-Verlag, New York,
 1995.

\bibitem{Fil}
Filippini R.J., The symplectic geometry of the theorems of {B}orel--{W}eil and
{P}eter--{W}eyl, Ph.D.~Thesis, {U}niversity of California, Berkeley, 1995.

\bibitem{FFW}
Fiore G., Franco D., Weber T., Twisted quadrics and algebraic submanifolds in
 $\mathbb R^n$, \href{https://arxiv.org/abs/2005.03509}{arXiv:2005.03509}.

\bibitem{FW}
Fiore G., Weber T., Twisted submanifolds of $\mathbb R^n$, \href{https://arxiv.org/abs/2003.03854}{arXiv:2003.03854}.

\bibitem{Glo}
Gloessner P., Star product reduction for coisotropic submanifolds of
 codimension~1, \href{https://arxiv.org/abs/math.QA/9805049}{arXiv:math.QA/9805049}.

\bibitem{GVF}
Gracia-Bond\'{\i}a J.M., V\'arilly J.C., Figueroa H., Elements of
 noncommutative geometry, \textit{Birkh\"auser Advanced Texts: Basler Lehrb\"ucher},
 \href{https://doi.org/10.1007/978-1-4612-0005-5}{Birkh\"auser Boston, Inc.}, Boston, MA, 2001.

\bibitem{Gre}
Greub W., Halperin S., Vanstone R., Connections, curvature, and cohomology,
 {V}ol.~{I}, {D}e {R}ham cohomology of manifolds and vector bundles,
 \textit{Pure and Applied Mathematics}, Vol.~47, Academic Press, New York~--
 London, 1972.

\bibitem{GR}
Gutt S., Rawnsley J., Equivalence of star products on a symplectic manifold; an
 introduction to {D}eligne's \v{C}ech cohomology classes, \href{https://doi.org/10.1016/S0393-0440(98)00045-X}{\textit{J.~Geom.
 Phys.}} \textbf{29} (1999), 347--392.

\bibitem{IZ}
Iglesias-Zemmour P., Diffeology, \textit{Mathematical Surveys and Monographs},
 Vol.~185, \href{https://doi.org/10.1090/surv/185}{Amer. Math. Soc.}, Providence, RI, 2013.

\bibitem{Kad}
Kadison R.V., Derivations of operator algebras, \href{https://doi.org/10.2307/1970433}{\textit{Ann. of Math.}} \textbf{83} (1966), 280--293.

\bibitem{KMS}
Kol\'a\v{r} I., Michor P.W., Slov\'ak J., Natural operations in differential
 geometry, \href{https://doi.org/10.1007/978-3-662-02950-3}{Springer-Verlag}, Berlin, 1993.

\bibitem{LOTV}
L\^e H.V., Oh Y.G., Tortorella A.G., Vitagliano L., Deformations of coisotropic
 submanifolds in {J}acobi manifolds, \href{https://doi.org/10.4310/JSG.2018.v16.n4.a7}{\textit{J.~Symplectic Geom.}} \textbf{16}
 (2018), 1051--1116, \href{https://arxiv.org/abs/1410.8446}{arXiv:1410.8446}.

\bibitem{Lee}
Lee J.M., Introduction to smooth manifolds, 2nd ed., \textit{Graduate Texts in
 Mathematics}, Vol.~218, \href{https://doi.org/10.1007/978-1-4419-9982-5}{Springer}, New York, 2013.

\bibitem{kappa}
Majid S., Ruegg H., Bicrossproduct structure of {$\kappa$}-{P}oincar\'e group
 and non-commutative geometry, \href{https://doi.org/10.1016/0370-2693(94)90699-8}{\textit{Phys. Lett.~B}} \textbf{334} (1994),
 348--354, \href{https://arxiv.org/abs/hep-th/9405107}{arXiv:hep-th/9405107}.

\bibitem{Mas}
Masson T., Submanifolds and quotient manifolds in noncommutative geometry,
 \href{https://doi.org/10.1063/1.531522}{\textit{J.~Math. Phys.}} \textbf{37} (1996), 2484--2497,
 \href{https://arxiv.org/abs/q-alg/9507030}{arXiv:q-alg/9507030}.

\bibitem{MV}
Michor P.W., Van\v{z}ura J., Characterizing algebras of {$C^\infty$}-functions
 on manifolds, \textit{Comment. Math. Univ. Carolin.} \textbf{37} (1996),
 519--521, \href{https://arxiv.org/abs/math.GT/9404228}{arXiv:math.GT/9404228}.

\bibitem{MR}
Moerdijk I., Reyes G.E., Models for smooth infinitesimal analysis,
 \href{https://doi.org/10.1007/978-1-4757-4143-8}{Springer-Verlag}, New York, 1991.

\bibitem{Ped}
Pedersen G.K., Lifting derivations from quotients of separable
 {$C^*$}-algebras, \href{https://doi.org/10.1073/pnas.73.5.1414}{\textit{Proc. Nat. Acad. Sci. USA}} \textbf{73} (1976),
 1414--1415.

\bibitem{Sak}
Sakai S., Derivations of {$W^{\ast} $}-algebras, \href{https://doi.org/10.2307/1970432}{\textit{Ann. of Math.}} \textbf{83} (1966), 273--279.

\bibitem{Sha}
Sharygin G., Deformation quantization and the action of {P}oisson vector
 fields, \href{https://doi.org/10.1134/s1995080217060129}{\textit{Lobachevskii~J. Math.}} \textbf{38} (2017), 1093--1107,
 \href{https://arxiv.org/abs/1612.02673}{arXiv:1612.02673}.

\bibitem{vS}
van Suijlekom W.D., Noncommutative geometry and particle physics, \textit{Mathematical
 Physics Studies}, \href{https://doi.org/10.1007/978-94-017-9162-5}{Springer}, Dordrecht, 2015.

\bibitem{Web}
Weber T., Braided {C}artan calculi and submanifold algebras, \href{https://doi.org/10.1016/j.geomphys.2020.103612}{\textit{J.~Geom.
 Phys.}} \textbf{150} (2020), 103612, 24~pages, \href{https://arxiv.org/abs/1907.13609}{arXiv:1907.13609}.

\end{thebibliography}
\end{document}